\begin{document}

\title{ANAPT: Additive Noise Analysis for Persistence Thresholding}
\titlerunning{ANAPT}

\author{Audun D. Myers         \and
        Firas A. Khasawneh 	\and
        Brittany T. Fasy
}

\institute{A. Myers \at
              Michigan State University, College of Engineering \\
              \email{myersau3@msu.edu}           
           \and
           F. Khasawneh \at
              Michigan State University, College of Engineering \\
              \email{khasawn3@egr.msu.edu}           
           \and
           B. Fasy \at
              Montana State University, School of Computing \& Dept. of
              Mathematical Sciences \\
              \email{brittany.fasy@montana.edu}           
}

\date{Preprint as of \today}

\maketitle

\begin{abstract}

We introduce a novel method for Additive Noise Analysis for Persistence Thresholding (ANAPT) which separates significant features in the sublevel
set persistence diagram of a time series based on a statistics analysis of the 
persistence of a noise distribution. 
Specifically, we consider an additive noise model and leverage the statistical analysis to provide a noise cutoff or confidence interval in the persistence diagram for the observed time series. 
This analysis is done for several common noise models including Gaussian, uniform, exponential and Rayleigh distributions.
ANAPT is computationally efficient, does not require any signal pre-filtering, is widely applicable, and has open-source software available. 
We demonstrate the functionality ANAPT with both numerically simulated examples and an experimental data set. 
Additionally, we provide an efficient $\Theta(n\log(n))$ algorithm for calculating the zero-dimensional sublevel set persistence homology.
\keywords{Topological Signal Processing \and Statistics \and Topological Data Analysis \and Sublevel Set Persistence \and Sublevel Sets \and Confidence Intervals \and Persistent Homology \and Signal Processing \and Extrema Detection \and Time Series Analysis \and Cutoff}
\end{abstract}

\section{Introduction} \label{sec:intro}

In this work, we are interested in separating noise from signal in time series
data. We approach this task through studying the statistics of sublevel set
persistence for several common noise models.  Specifically, we are using
persistence as a tool from Topological Data Analysis (TDA), which is a method of data
analysis~\cite{Edelsbrunner2002} used to study datasets from the
perspective of topology or shape. This intersection between TDA and time series
analysis is known as Topological Signal Processing (TSP). TSP includes
mathematical tools for analyzing signals from dynamical systems through TDA,
which includes applications such as dimensionality reduction, detecting
bifurcations in the system behavior, or dynamic state detection (regular or
chaotic).
TSP methods help study these applications by leveraging the mathematics of
dynamical systems theory, algebraic topology, information theory, and graph
theory.
Because of the novel tools used in TSP, it can be used to reveal information
that may not be currently accessible through standard dynamic systems methods.

In general, there are two common methods for using TSP to study single variable
time series. The first is to study the sublevel sets of a time series and how
they persist as the height of the sublevel set is increased. The second is to
transform the time series into a shape in Euclidean space  (e.g., a sliding
window embedding~\cite{Takens1981} creates an shape known as the \emph{phase
space}) and
study the shape through the Vietoris--Rips filtration.
Studying the shape of the phase space has been successfully
used for periodicity quantification with applications in signal
analysis~\cite{Perea2014}, detecting quasiperidoicity in video
data~\cite{Tralie2018b}, chatter detection in turning
processes~\cite{Khasawneh2018a, Yesilli2019}, stability of delayed
equations~\cite{Khasawneh2017}, and complex network analysis~\cite{Myers2019}.
While the persistent homology of point clouds has been shown to be a promising
tool for time series analysis, in this work, we take the first approach and
study the statistical
analysis of persistence of the sublevel sets of the original time
series.

Through persistence, we can capture the significance of sublevel sets in the time series, which can be extended to higher dimensional data-sets (a more detailed introduction follows in
\secref{sublevel_set_persistence_background}).
Sublevel set persistence has been successfully used for a wide range of time series analysis applications including local extrema detection~\cite{Gholizadeh2018}, true step detection in signal processing~\cite{Khasawneh2018b}, fourier spectrum analysis~\cite{Myers2020}, arrythmia detection~\cite{Dindin2020}, and cancer histology~\cite{Lawson2019}.

One of the most attractive features of sublevel set persistence as a time series
analysis tool is its stability to noise perturbations
(see, e.g.,~\cite{Cohen-Steiner2006}). However, even with the stability of
persistence diagrams, there can be many points in the diagram which we wish to distinguish as associated to either noise or true signal. One method of doing this is by estimating a confidence interval for each point in the persistence diagram, which has recently been of interest in the field of TDA~\cite{Bubenik2015, Chazal2013, Fasy2014}. 
While methods based on bootstrap resamping~\cite{chazal2017robust, Chazal2013, Fasy2014} and entropy~\cite{Atienza2017} have been developed for separating noise from signal in the persistence diagrams, they were not strictly designed for sublevel set persistence. 
This can lead to problems in both implementation and computational demand when applied to sublevel set persistence. These issues are discussed in Section~\ref{sec:sublevel_set_persistence_background} and then demonstrated in Section~\ref{ssec:comparison}.

This manuscript is organized as follows. In
\secref{sublevel_set_persistence_background}, we first provide an
introduction of persistence for time series data and the effects of additive noise on
the persistence diagram as well as commonly used methods for seperating noise from signal. In \secref{stats_of_noise}, we introduce a novel
analysis of the statistics of the lifetimes of the sublevel set persistence diagram. In
\secref{noise_models_applied}, we realize the statistical analysis for
cutoff expressions for Gaussian, uniform, Rayleigh, and exponential additive
noise distribution. In \secref{stats_param_estimation} and
\ref{sec:signal_compensation}, we provide a method to approximate the
distribution parameter of the additive noise as well as a signal compensation
term, which is required in the cutoff equation. Lastly, in
\secref{examples}, we apply our ANAPT method to both numeric and experimental time
series to validate its functionality and make comparisons to existing methods.


\section{Sublevel Set Persistence} \label{sec:sublevel_set_persistence_background}
We now provide an introduction to persistence, as it applies to computing
persistence of the sublevel sets of a time series. Let us begin with the single
variable function $f:\mathbb{R} \rightarrow \mathbb{R}$.
Given~$r \in \R$, we define the \emph{sublevel set
below~$r$} as~$f^{-1}(-\infty,r]$. As the filtration parameter
$r$ increases, the sublevel sets may grow but remain the same (up to homology) until a local extrema (i.e., a
local minimum or
maximum) is reached.\footnote{Here, we assume that the function $f$ satisfies
some mild ``niceness'' conditions, such as being
q-tame~\cite{chazal2016structure}.}
If the extrema is a local minimum, then a new connected component or set is
``born'' at height~$r_B$; we label
that set with the value $r_B$. On the
other hand, if the extrema is a local maximum, two previously existing sets are
combined.  If the two sets were labeled~$r_B$ and~$r_B'$, with~$r_B \leq
r_B'$ and the maximum attained at $r_D$, then, by the Elder Rule~\cite[p.~150]{Edelsbrunner2010}, we say that
the component born at $r_B'$ dies going into~$r_D$, and the resulting set
assumes the label $r_B$.  The pair
$(r_B', r_D)$ is called a persistence pair. As $r$ ranges from~$-\infty$ to~$\infty$, the
\emph{persistence diagram} is the collection of all $n$ such pairs,~$\dgm{f}=\{(b_i,d_i)\}_{i=1}^n$.  Any unpaired births are called
\emph{essential classes} and are paired with a death coordinate of~$\infty$;
thus,~$\dgm{f}$ is embedded in the \emph{extended plane} $\overline{\R}^2$.
The \emph{lifetime} or \emph{persistence} of a point~$(b_i,d_i) \in \dgm{f}$ is defined as~$\ell_i = d_i-b_i$.
In this paper, our functions are only sampled on a finite domain, with the first
sample at time $t_a$ and the last sample at time $t_b$.  We obtain a
continuous function over $[t_a,t_b]$ by using a
piecewise linear interpolation between consecutive samples, and extending the
function to $\pm \infty$ by extending the first (resp., last) edges to rays.
Doing so allows us to define a persistence diagram that does not have
critical points on the boundary of our time series.  As such, we study the
persistence points where both coordinates are finite, and omit persistence
points that contain an unbounded coordinate.

To give a concrete example of a persistence diagram, we demonstrate a
simple example for the function shown in \figref{0d_sublevel_persistence}.
This function has thirteen sample points, two local minima, and two local
maxima.
\begin{figure}[h]
    \centering
    \includegraphics[width = 0.55\textwidth]{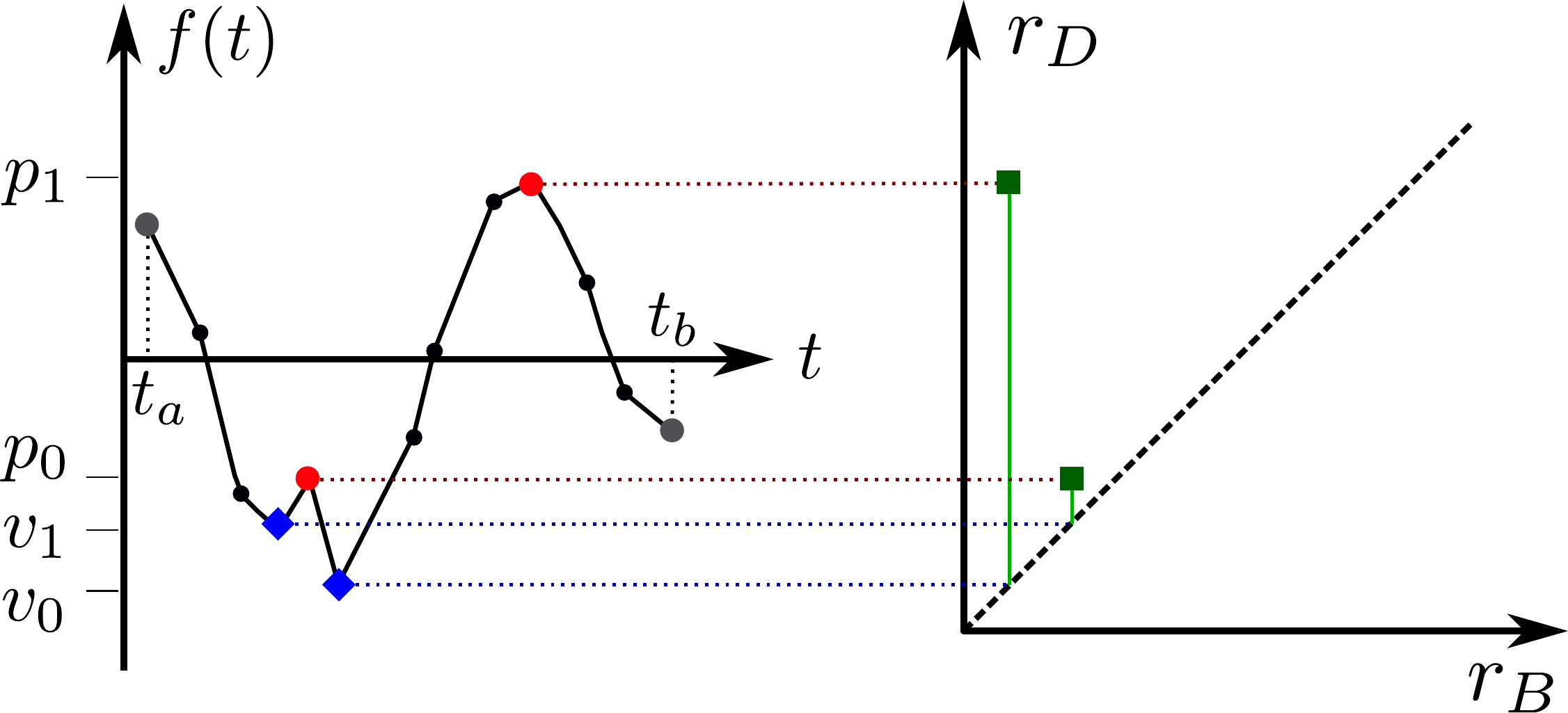}
    \caption{Persistence diagram summarizing the sublevel sets of
    a function $f(t)$ over finite domain $t \in[t_a, t_b]$.
    This function has two local minima (blue squares) and two local maxima (red
    circles.
    }
    \label{fig:0d_sublevel_persistence}
\end{figure}
The lowest finite critical value of the function occurs at height~$v_0$.  For all
$r<v_0$, $f^{-1}(-\infty,r]$ is the ray $[f^{-1}(r),\infty)$, where we observe
that $f^{-1}(r)$ is a well-defined and greater than $t_b$.  This connected
component is labeled with $-\infty$, since it is ``born'' at $-\infty$.
Then, at height~$r=v_0$, a second connected component is born.
The next topological change occurs at height $r=v_1$, where a third connected component is born.
The next
extrema is reached when~$r = p_0$. At this extrema, the sublevel set that
was born at $r = v_1$ dies, while the sublevel set born at $r = v_0$
persists based on the Elder Rule. This pair $(v_1,p_0)$ is recorded in the
persistence diagram. From here, the next change happens at $r =
p_1$, where the second sublevel set dies and is recorded in the persistence
diagram as~$(v_0,p_1)$.
Then, no further topological changes occur, but this sublevel set continues to
grow as $r$ grows.  This essential class is recorded in the persistence diagram as
$(-\infty,\infty)$ and is not studied in our analysis.
As shown in the persistence diagram, the point~$(v_1, p_0)$ is close to
the diagonal (the line $y=x$), which signifies that the sublevel set only persisted for a short
range of heights ($r$); on the other hand, the point~$(v_0, p_1)$ is far from
the diagonal, suggesting it was from a significant sublevel set.
The algorithm used to calculate the this persistence diagram is detailed in \appref{algorithms}.

The idea of persistence can be extended to higher dimensions allowing for the
analysis of the shape of high-dimensional data sets. However, for our work, we
only need to analyze the zero-dimensional features (i.e., connected components)
of a one-dimensional function. A more thorough
background on TDA, and persistent homology specifically, can be found
in~\cite{Edelsbrunner2008, Munch2017, Perea2018}. Other common ways for studying time series with a similar perspective is through merge trees or dendograms~\cite{Carlsson2012, Chowdhury2017, Lee2012}.

\mypara{Sublevel Set Persistence with Additive Noise} \label{ssec:noise_effects}

We now investigate the stability of sublevel set persistence diagrams to additive noise for single variable functions. To illustrate the stability,
we first take an example time series with additive noise as $x(t) +
\epsilon$, where $x(t)$ is sampled at a uniform rate $f_s$ and $\epsilon$ is
additive noise from the noise model~$\mathcal{N}$. An example of a persistence diagram from the time series with
additive noise $\dgm{x + \epsilon}$ is shown in
\figref{stats_overview_fig}, along with the diagram without the additive
noise~$\dgm{x}$. This example also demonstrates how a cutoff
$C_\alpha$ can be used to separate the significant points in the persistence diagram and those associated to the additive noise.
\begin{figure}[h]
    \centering
    \includegraphics[scale = 0.52]{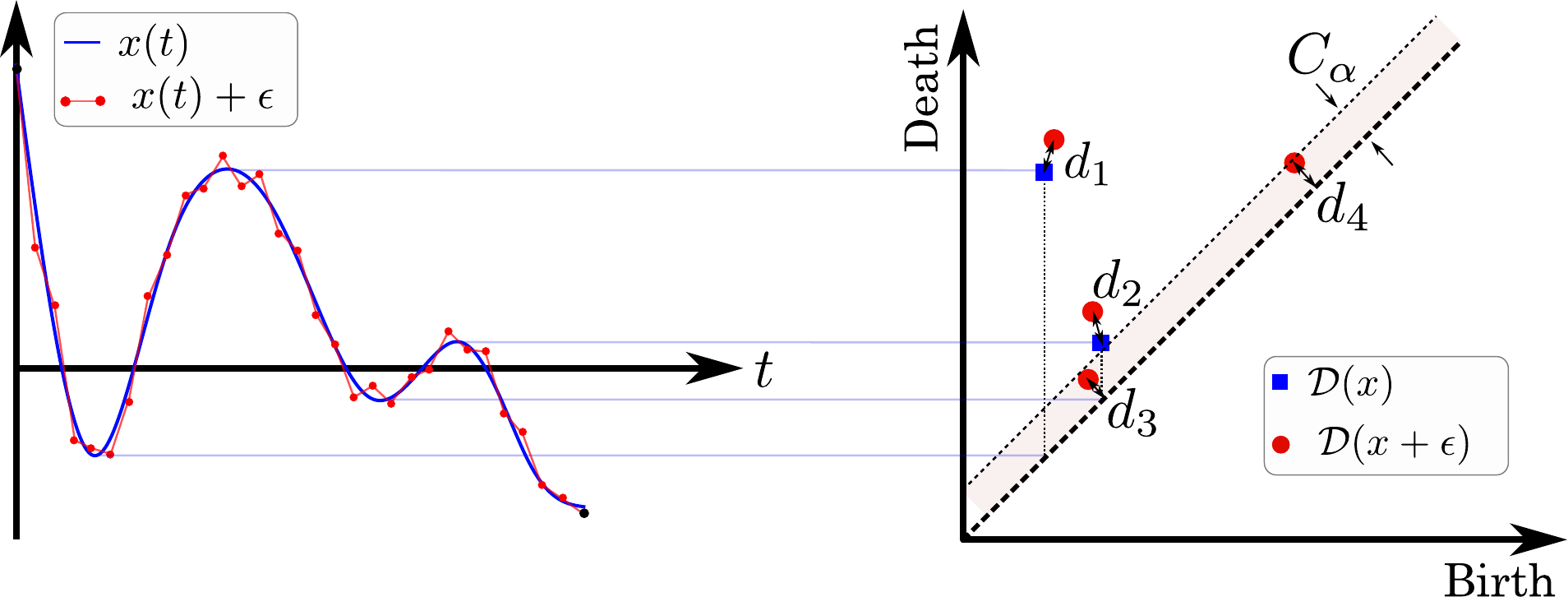}
    \caption{Sublevel set persistence applied to $x(t)$ of a single variable
    function or time series with and without additive noise $\epsilon$ from $\mathcal{N}$, shown
    in red and blue, respectively. This
    demonstrates the stability of persistent homology with the time series
    (left) with and without additive noise and the small effect on the resulting
    persistence diagrams (right). In addition, the light red region separates the significant features from those associated to additive noise.}
    \label{fig:stats_overview_fig}
\end{figure}

This example demonstrates that the addition of (small) noise does not have a large
effect on the position of significant sublevel sets in the persistence diagram
with the distances between significant points ($d_1$ and $d_2$) all being
relatively small. This is no surprise due to the stability theorem of
the bottleneck distance for persistence diagrams~\cite{Cohen-Steiner2006,chazal2016structure},
where the bottleneck distance is defined as the minimum distance to match two persistence diagrams.
For example, if we assume $d_1>d_2>d_3>d_4$, then the bottleneck distance would
be $d_1$. However, additive noise does introduce several points in the persistence diagram located near the diagonal with relatively small lifetimes.
These noise-artifact
persistence pairs are formed from the peak-valley pairs in the additive noise. This work
focuses on a statistical analysis of these lifetimes to develop a method for
separating the significant persistence diagram points from those of additive
noise, shown in as light red region in example persistence diagram of
\figref{stats_overview_fig}, through a cutoff $C_\alpha$ with~$\alpha \in [0,1] $ as the given confidence level.

\mypara{Current Methods for Separating Noise from Signal in the Persistence Diagram}
The methods for developing confidence sets and separating noise from signal for persistence diagrams using
resampling techniques~\cite{Chazal2013, Fasy2014} were not strictly developed
for persistence of sublevel sets of real-valued time series. This means that using
the bottleneck bootstrap as presented in \cite{chazal2017robust} gives
conservative confidence sets. 
One solution to this problem could be to tailor these methods for sublevel set persistence by applying resampling techniques for time series (e.g., stationary bootstrap~\cite{Politis1994} and the sieve bootstrap~\cite{Buehlmann1997}). 
However, these methods have general requirements of weakly dependent stationary observations or exceptionally high sampling rates such that $f_s >> f'$, where $f'$ the highest dominant frequency of the time series. 
An alternative approach to applying a bootstrap resampling is to use a function fitting procedure that is suitable for strongly dependent time series. Using the function fit of the signal, the residuals can be resampled to the time series to apply the bottleneck bootstrap technique. 
Unfortunately, this method is plagued with additionally computational steps including the function fitting initial step, repeated sublevel set persistence calculations, and the bottleneck distance calculation. The possibility of signal distortion when fitting to the curve can also cause for inaccurate cutoff estimations.
Another method for separating noise from significant features in a persistence
diagram is persistent entropy~\cite{Atienza2017}. However, this method may not
properly distinguish between noise and significant features in the persistence
diagram if the number of significant points in the persistence diagram is
relatively large compared to the amount of noise.

Because of the significant drawbacks of the current methods for developing
confidence sets and associated cutoffs in the persistence diagram for sublevel
set persistence, we have developed a new method that directly analyzes the statistics of additive noise in the sublevel set persistence diagram. 
We have also made the Python code for implementing the $\Theta(n\log(n))$ algorithm (see Section~\ref{app:algorithms}) discussed throughout the manuscript publicly available through the Python package \texttt{teaspoon}\footnote{\label{footnote_label}\url{https://lizliz.github.io/teaspoon/}}.


\section{Statistics of Additive Noise in the Persistence Diagram}
\label{sec:stats_of_noise}
Before studying a time series with additive noise, $x+ \epsilon \colon \R \to
\R$,
we analyze the statistics of sublevel set persistence diagrams of
the noise alone.
Our goal is to leverage this analysis in order to generate a cutoff in the persistence diagram
to separate out these noise-artifact points in the persistence diagram for
$\dgm{x+\epsilon}$ from the points that capture true features of $x$.

\subsection{Sublevel Set Persistence Diagram Statistics Background}
\label{ssec:stats_relationship_background}
We start with the noise, which can be thought of as a (sampled) function
$\epsilon \colon \R \to \R$, where, for each $t \in \R$, the value $\epsilon(t)$ is a
random independently and identically distributed (iid) variable sampled from
a predefined noise distribution $\mathcal{N}$.
In our noise model, there is no covariance structure between these~$\R$-indexed
family of random variables.
Let $\noise$
denote the induced probability distribution over functions from~$\R$ to $\R$, so~$\epsilon \sim \noise$.

The first step in developing a cutoff based on the persistence diagram
statistics of additive noise $\dgm{\epsilon}$ is to determine a
relationship between the descriptive additive noise distribution parameters and
the distribution of the lifetimes. To do
this, we develop an expression for the expected lifetime of points in
$\dgm{\epsilon}$.
Let~$f \colon \R \to \R$ and~$F \colon \R \to \R$ be the
probability density function and cumulative density function of $\mathcal{N}$, respectively.
Let~$f_B \colon \R \to \R$ and~$f_D \colon \R \to \R$ be the
probability density functions
for the local minima (corresponding to births) and maxima (corresponding to
deaths) of the sublevel sets
from a random noise function $\varepsilon \sim \noise$.  Let~$F_B$ and~$F_D$ be the corresponding
cumulative density functions.
By the commutative property of addition and the
definition of a lifetime being the difference between the death and birth times,
the expected or mean lifetime $\mu_L$ is the difference between
the expected birth times~$\mu_B := \mathbb{E}(B)$ and death times~$\mu_D :=
\mathbb{E}(D)$, where $B$ and $D$ are the sets of birth and death values,~as
\begin{equation}\label{eq:muL_expression}
    \mu_L  := \mu_D - \mu_B
           = \int_{-\infty}^{\infty} z \left[ f_D(z)-f_B(z) \right] dz.
\end{equation}
A formal proof of this relationship is
provided in \thmref{mean-lifetime} of the appendix.
From \eqref{muL_expression}, we move forward knowing that $\mu_L$ can be
defined using only expressions for~$f_B$ and~$f_D$.
In other words, we only need the distribution of birth and death times, not of
the lifetimes, which would require knowing how the births and deaths are paired.

Let $z \in \R$ and $\epsilon_L, \epsilon_R \stackrel{iid}{\sim} \mathcal{N}$.
Because~$\epsilon_{L}$ and $\epsilon_{R}$ are independent, by the multiplication
rule for probability, we can state that the probability
that~$\max\{ \epsilon_L, \epsilon_R \} < z$ is
\begin{equation}
    p(\epsilon_{L} < z) p(\epsilon_{R}<z).
    \label{eq:maxima_prob_full}
\end{equation}
Recalling that $F$ is the cumulative distribution function of $\mathcal{N}$,
we can equate these probabilities as
$$
    p(\epsilon_{L}<z)
        = p(\epsilon_{R}<z)
        = F(z).
$$
Combining with
\eqref{maxima_prob_full}, we define the probability distribution function of
maximas~$f_{\max} \colon \R \to \R$ by
\begin{equation}
    f_{\max}(z) = F^2(z).
\label{eq:maxima_prob_dist}
\end{equation}
Similarly, we can show that the local minima probability distribution function
$f_{\min} \colon \R \to \R$ is defined by
\begin{equation}
    f_{\min}(z) = [1-F(z)]^2,
\label{eq:minima_prob_dist}
\end{equation}
since $p(\epsilon_{L}>z) = p(\epsilon_{R}>z) = 1 - F(z)$.

Using these distributions,
we calculate the local maxima probability density function~$f_D$, which
represents the distribution of death times.
We proceed in the discrete setting and consider $\epsilon \sim \noise$ represented by a
finite sequence of values; that is, we have~$\epsilon = ( \epsilon_1, \epsilon_2, \ldots, \epsilon_n )$ where $\epsilon_i
\stackrel{iid}{\sim} \mathcal{N}$. Fixing $z=\epsilon_i$
$\epsilon_L=\epsilon_{i-1}$, and~$\epsilon_R=\epsilon_{i+1}$, we know the
probability that $\epsilon_i$ is a maximum (or a minimum) from above.
To calculate $f_D$ (resp., $f_B$), we take the convolution of $f$ and $f_{\max}$
(resp., of $f$ and $f_{\min}$) and
normalize (to ensure the density integrates to unity):
\begin{equation}
    \begin{split}
        {f}_D(z) & := \frac{f(z)F^2(z)}{N_D},\\
        {f}_B(z) & := \frac{f(z)[1-F(z)]^2}{N_B},
    \end{split}
    \label{eq:f_proper}
\end{equation}
where $N_B = \int_{-\infty}^{\infty}f(z)[1-F(z)]^2dz$ and $N_D = \int_{-\infty}^{\infty}f(z)F^2(x)dz$.
We calculate $N_B$ and $N_D$ from \eqref{f_proper} by substituting $f(z)
= F'(z)$, which reduces the $N_D$ equation to
\begin{equation}
\begin{split}
N_D & = \int_{-\infty}^{\infty} F'(z)F^2(z)dz = \int_{-\infty}^{\infty} \frac{1}{3}{(F^3(z)})'dz \\
    & = \frac{1}{3} F^3(z)|_{-\infty}^{\infty}  = \frac{1}{3},
\end{split}
\label{eq:N_D}
\end{equation}
since $F(\infty) = 1$ and $F(-\infty) = 0$. Similarly,
\begin{equation}
\begin{split}
N_B & = \int_{-\infty}^{\infty} f(z)[1-F(z)]^2 dz  = \int_{-\infty}^{\infty} f(z)[1-2F(z) + F^2(z)] dz  = N_D + \int_{-\infty}^{\infty} f(z)[1-2F(z)] dx \\
	& = N_D + \int_{-\infty}^{\infty} F'(z) dz - \int_{-\infty}^{\infty} (F^2(z))' dz = N_D + \left[ F(z) - F^2(z) \right]|_{-\infty}^{\infty} = N_D = \frac{1}{3}.
\end{split}
\label{eq:N_B}
\end{equation}
We can now reduce \eqref{f_proper} to
\begin{equation}
\begin{split}
{f}_B(z) & = 3{f(z)[1-F(z)]^2}, \\
{f}_D(z) & = 3{f(z)F^2(z)},
\end{split}
\label{eq:f_proper_simplified}
\end{equation}

For demonstrative purposes we now assume $f(z)$ is of a Gaussian distribution to validate our expressions in \eqref{f_proper}.
Specifically, the Gaussian probability density function is defined as
\begin{equation}
f(z) = \frac{1}{\sqrt{2\pi \sigma^2}}e^{-\frac{(z-\mu)^2}{2\sigma^2}},
\label{eq:f_norm}
\end{equation}
with a cumulative distribution
\begin{equation}
F(z) = \frac{1}{2}\left[  1+{\rm erf}\left(  \frac{z-\mu}{\sigma \sqrt{2}} \right)  \right].
\label{eq:F_norm}
\end{equation}
To validate the resulting expressions for ${f}_B(z)$ and ${f}_D(z)$ in
\eqref{f_proper_simplified}, a numerical simulation of a normal distribution
$\mathcal{N}_n(\mu = 0, \sigma^2 = 1)$ of length $n = 10E5$ is used (see
\figref{histos_of_B_D_th_comp}). This analysis shows a very similar result
between the histograms $h(*)$ and distributions.
\begin{figure}[h]
    \centering
    \includegraphics[scale = 0.55]{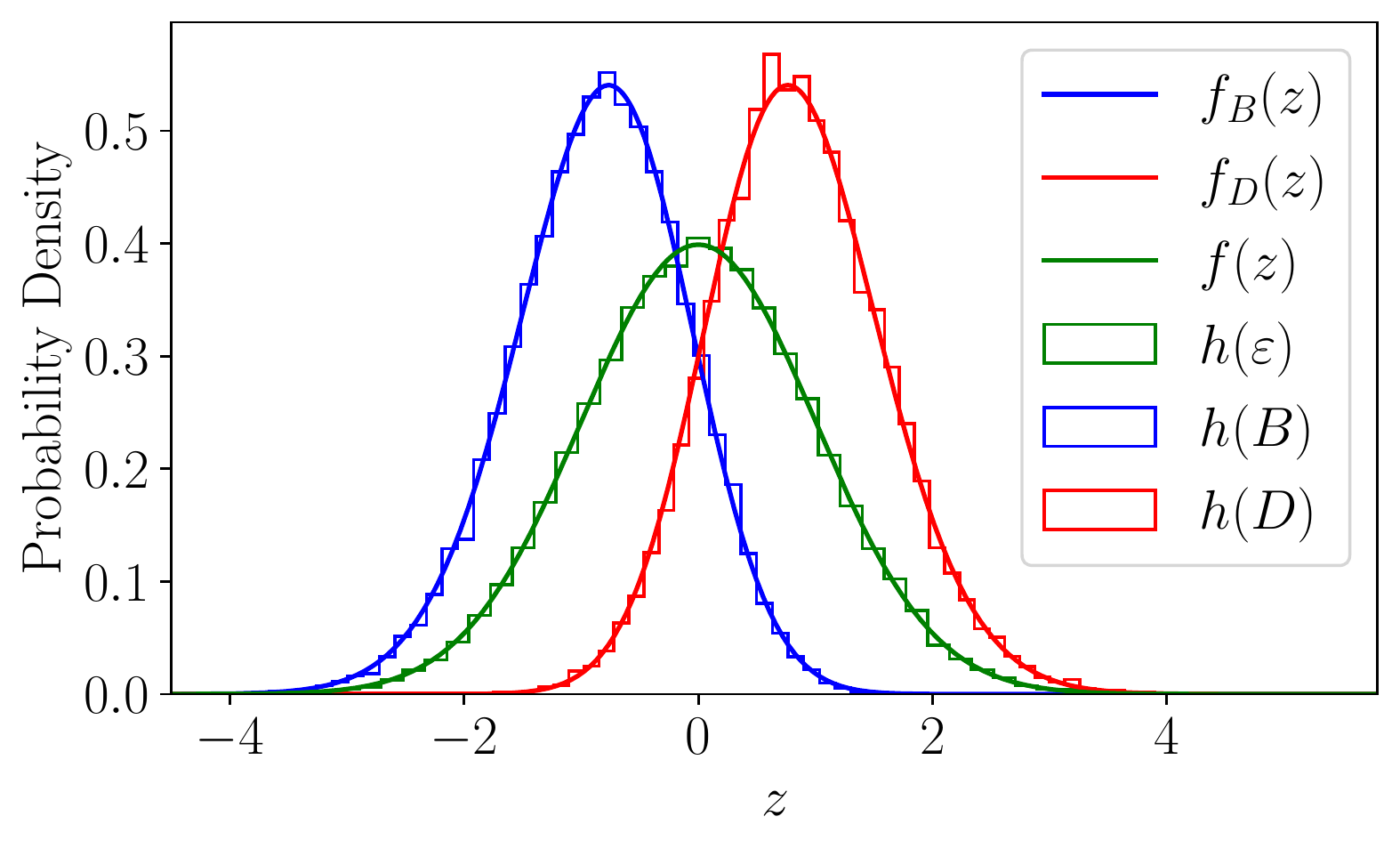}
    \caption{Histograms $h(*)$ of the zero mean normal distriubtion $\mathcal{N}(0,\sigma^2 =1)$ and the resulting birth times $B$ and death times $D$, which are compared to the density distributions from \eqref{f_proper}.}
    \label{fig:histos_of_B_D_th_comp}
\end{figure}

Now that we have shown that our expressions for the probability distribution of
the minima and maxima are correct, we proceed to correlate the mean lifetime
$\mu_L$ to the additive noise distribution parameters. From our results in
\eqref{f_proper_simplified} applied to \eqref{muL_expression} we calculate the mean lifetime as
\begin{equation}
\mu_L = 3\int_{-\infty}^{\infty} z f(z)\left[ F^2(z)-(1-F(z))^2 \right] dz =      3\int_{-\infty}^{\infty} z \left[ (F^2(z))'-F'(z) \right] dz,
\label{eq:muL_expression_simplified}
\end{equation}
which is simplified using integration by parts as
\begin{equation}
\boxed{\mu_L = 3\int_{-\infty}^{\infty} F(z)\left[ 1-F(z) \right] dz.}
\label{eq:muL_expression_final}
\end{equation}

From the numerical simulation in \figref{histos_of_B_D_th_comp} we also found the $\bar{L} \approx 1.6921$, where $\bar{L}$ is the sample mean of the sublevel set persistence lifetimes from $\mathcal{N}(0,\sigma^2 =1)$. This is very similar compared  to the numerically estimated theoretical $\mu_L \approx 1.6925$ from \eqref{muL_expression_final}. These results suggest that \eqref{muL_expression_final} is correct.

\subsection{Cutoff Background} \label{ssec:extrema_distributions_for_cutoff}
To determine a suitable cutoff using our ANAPT method, we again start by assuming we have $n$ random
samples from our noise distribution:~${\epsilon} = \{ \epsilon_1, \epsilon_2, \ldots, \epsilon_n \} \stackrel{iid}{\sim} \mathcal{N}$ with a cumulative
probability function $F$. The probability that the minimum of~${\bf \epsilon}$ is
less than the value $a$ is equivalent to
\begin{equation}
P(\min({\bf \epsilon}) < a) = 1 - P(\epsilon_1>a, \epsilon_2>a,\ldots, \epsilon_n>a),
\label{eq:P_min_generic}
\end{equation}
where $P(\epsilon_i>a) = 1 - F(a)$. If we extend this relationship to all $n$
realizations, we can express this probability as
\begin{equation}
P(\min({\bf \epsilon}) < a) = 1 - (1-F(a))^n.
\label{eq:P_min_generic}
\end{equation}
Similarly, for $b>a$, an expression for the probability that an element of ${ \epsilon}$ is
greater than~$b$~is
\begin{equation}
P\left(\max({\epsilon}) > b\right) = 1 - (F(b))^n.
\label{eq:P_max_generic}
\end{equation}
If we now take both of these probabilities, we can extend them to the maximum
finite lifetime as $\max(L) \lessapprox \max({\epsilon}) -\min({\epsilon})$. we
can use this to generate a probability of a lifetime being greater than $b-a$ as
\begin{equation}
\alpha = P(\max(L) > b-a) \gtrapprox P\big(\max({\epsilon}) > b, \min({\epsilon}) < a\big) = (1 - [F(b)]^n)(1 - [1-F(a)]^n),
\label{eq:L_max_generic}
\end{equation}
where $\alpha$ is the confidence of this event occurring.  If the $f(z)$
associated to $F(z)$ of \eqref{L_max_generic} is symmetric about some mean $\mu$
such that $c = b-\mu = \mu-a$, we can reduce \eqref{L_max_generic} to
\begin{equation}
\alpha = {(1 - [F(c)]^n)}^2
\label{eq:L_max_generic_symmetric}
\end{equation}
since $F(b)= 1-F(a)$ for the symmetric case.
\eqref{L_max_generic_symmetric} can be then solved for $c$ as
\begin{equation}
c = F^{-1}\left[\left(1-\sqrt{\alpha}\right)^{1/n}\right].
\label{eq:L_max_generic_symmetric2}
\end{equation}
Additionally, we know that a cutoff should be set such that $C_\alpha = b-a =
2c$ resulting in a cutoff equation as
\begin{equation}
\boxed{C_\alpha = 2F^{-1}\left[{\left(1-\sqrt{\alpha}\right)}^{1/n}\right].}
\label{eq:cutoff_symmetric}
\end{equation}

If there is no symmetry in the distribution then we need a new cutoff equation.
To do this, we return to our probability equation as
\begin{equation}
\alpha = P(\max(L) > b-a) \gtrapprox P(\min({\epsilon}) < a, \max({\epsilon}) >b)  = (1-{[1-F(a)]}^n)(1 - [F(b)]^n),
\label{eq:alpha_exact_non_symmetric}
\end{equation}
However, unlike \eqref{cutoff_symmetric}, we can not solve \eqref{L_max_generic}
for a parameter $c$ due to their being no symmetry between $a$ and $b$ about a
mean $\mu$ which means we must simplify \eqref{alpha_exact_non_symmetric}.  To do this, we assume that $P(\min({\epsilon}) < a) = P(\max({\epsilon})
>b)$ or $1-{[1-F(a)]}^n = 1 - [F(b)]^n = \sqrt{\alpha}$. We then solve for
$a$ and~$b$ separately as
\begin{equation}
a = F^{-1}\left[  1 - {(1-\sqrt{\alpha})}^{1/n}  \right]
\label{eq:prob_a}
\end{equation}
and
\begin{equation}
b = F^{-1}\left[ {(1-\sqrt{\alpha})}^{1/n}  \right].
\label{eq:prob_b}
\end{equation}
With $C_\alpha = b-a$ and the values of $a$ and $b$ from
\eqref{prob_a} and \eqref{prob_b}, respectively, we can solve for our general
cutoff expression as
\begin{equation}
\boxed{C_\alpha = F^{-1}\left[ {(1-\sqrt{\alpha})}^{1/n}  \right] - F^{-1}\left[  1 - {(1-\sqrt{\alpha})}^{1/n}  \right] .}
\label{eq:cutoff_nonsymmetric}
\end{equation}

For our application we want to have a high confidence level that no persistence pairs associated to noise occur with a lifetime greater than the cutoff. We suggest
using a confidence level of~$\alpha = 0.001$ or $0.1\%$ for a low probability of this occurring.



As a summary, the ANAPT cutoff equations in \eqref{cutoff_symmetric}~and~\eqref{cutoff_nonsymmetric} are only
dependent on the desired confidence $\alpha$, the signal length $n$ and the
cumulative probability distribution $F(z)$. In \secref{noise_models_applied} we demonstrate how to apply \eqref{cutoff_symmetric} and \eqref{cutoff_nonsymmetric} for the Gaussian, uniform, Rayleigh, and exponential distribution.


\section{Cutoff for Noise Models} \label{sec:noise_models_applied}
For applying noise models to the confidence levels in
\eqreftwo{L_max_generic}{L_max_generic_symmetric} for our ANAPT method, we need to be
either given the additive noise parameters, or estimate them from the lifetimes.
However, before this can be done, we need to understand which parameters are
needed given the additive noise distribution $f(z)$. We do this analysis
for Gaussian (normal), Uniform, Rayleigh, and exponential
distributions as shown in \figref{distributions_of_noise_models}.
\begin{figure}[h]
    \centering
    \includegraphics[scale = 0.37]{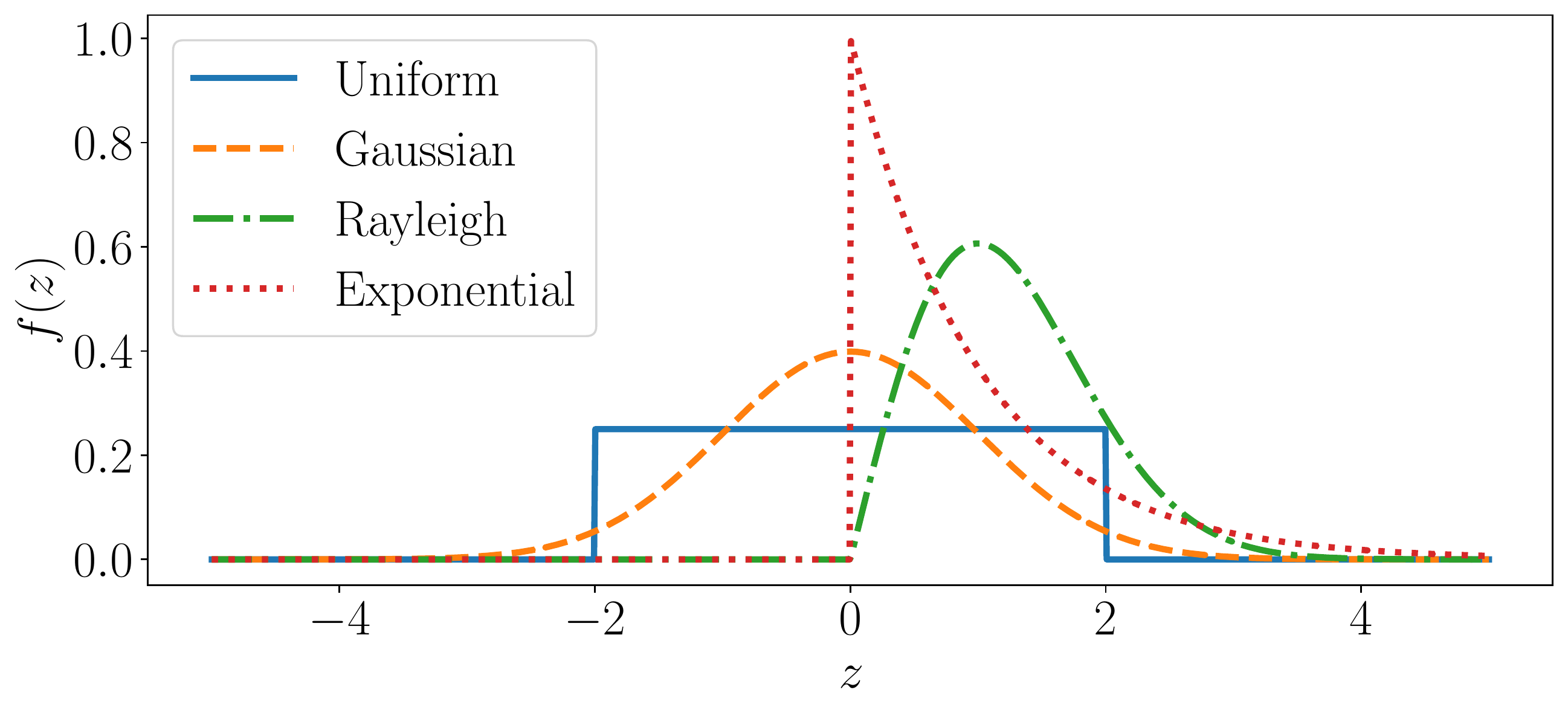}
    \caption{Additive noise probability distributions $f(z)$ for the four models realized in this work: uniform, Gaussian, Rayleigh, and exponential.}
    \label{fig:distributions_of_noise_models}
\end{figure}

\subsection{Cutoff for Gaussian Noise}
We start our analysis with the commonly used Gaussian distribution model. The Gaussian (normal) probability distribution function is defined as
\begin{equation}
f(z) = \frac{1}{\sqrt{2\pi \sigma^2}}e^{-\frac{(z-\mu)^2}{2\sigma^2}},
\label{eq:gaus_f_norm}
\end{equation}
with a cumulative distribution function
\begin{equation}
F(z) = \frac{1}{2}\left[  1+{\rm erf}\left(  \frac{z-\mu}{\sigma \sqrt{2}} \right)  \right].
\label{eq:gaus_F_norm}
\end{equation}
We start by solving for the inverse of \eqref{gaus_F_norm} as
\begin{equation}
F^{-1}(u) = \sqrt{2} \sigma {\rm erf}^{-1}\left(2u - 1\right) + \mu.
\label{eq:gaus_F_norm_inv}
\end{equation}
Since the mean shift $\mu$ has no effect on the sublevel set lifetimes we can
ignore it and apply \eqref{gaus_F_norm_inv} with~$\mu = 0$ to solve for the cutoff from \eqref{cutoff_symmetric} as
\begin{equation}
C_\alpha = 2^{3/2}\sigma \: {\rm erf}^{-1}\left[2(1-\sqrt{\alpha})^{1/n} - 1\right].
\label{eq:cutoff_normal}
\end{equation}
With a full development of the statistics of sublevel set persistence for
Gaussian (normal) additive noise we are able to determine a suitable cutoff for
iid noise with only the distribution parameter $\sigma$ needing estimated.

\subsection{Cutoff for Uniform Noise}
Let $a < b \in \R$.
The uniform distribution over the interval~$[a,b]$ has a probability density function defined as
\begin{equation}
f(z)=
\left\{ \begin{array}{ll}
   \frac{1}{b-a} & z\in[a,b] \\
   0 &\text{otherwise}
\end{array} \right.
\label{eq:unif_F_norm}
\end{equation}
with a cumulative distribution function
\begin{equation}
F(x)=
\left\{ \begin{array}{ll}
   0 & z<a \\
   \frac{z-a}{b-a} & z\in[a,b] \\
   1 & z>b.
\end{array} \right.
\label{eq:unif_F_norm}
\end{equation}
By assuming a symmetric distribution about zero (this assumptions
does not influence the resulting cutoff due to the properties sublevel set
persistence lifetime) such that $a = -b$ and~$\Delta = b-a$.
This changes $F(x)$ to
\begin{equation}
F(z)=
\left\{ \begin{array}{ll}
   0 & z<-\frac{\Delta}{2} \\
   \frac{2z+\Delta}{2\Delta} & z\in[-\frac{\Delta}{2},\frac{\Delta}{2}] \\
   1 & z>\frac{\Delta}{2}
\end{array} \right.
\label{eq:unif_F_norm_changed}
\end{equation}
If we now apply \eqref{cutoff_symmetric} to the inverse of the cumulative probability distribution in \eqref{unif_F_norm_changed}, we can calculate $C_\alpha$ as
\begin{equation}
C_\alpha = {\Delta}\left[ {2\left(1-\sqrt{\alpha}\right)}^{1/n} - 1 \right].
\label{eq:cutoff_uniform}
\end{equation}
Equation~(\ref{eq:cutoff_uniform}) only requires an estimate of the distribution parameter $\Delta$ as both $\alpha$ and $n$ are chosen as desired and the length of the time series, respectively.

\subsection{Cutoff for Rayleigh Noise}
The Rayleigh distribution has a probability density function over the domain $x \in [0, \infty)$ and is defined as
\begin{equation}
f(z) = \frac{z}{\sigma^2}e^{-\frac{z^2}{2\sigma^2}},
\label{eq:rayleigh_f_norm}
\end{equation}
with a cumulative distribution function
\begin{equation}
F(z) = 1-e^{-\frac{z^2}{2\sigma^2}}.
\label{eq:rayleigh_F_norm}
\end{equation}
Since this distribution is asymmetric we use \eqref{cutoff_nonsymmetric} to calculate $C_\alpha$ as
\begin{equation}
C_\alpha = \sigma \left( \sqrt{-2 \ln\left([1-\sqrt{\alpha}]^{1/n}\right)} - \sqrt{-2 \ln\left(1-[1-\sqrt{\alpha}]^{1/n}\right)}  \right),
\label{eq:cutoff_rayleigh}
\end{equation}
where $\sigma$ is the only parameter that needs to be provided to calculate the cutoff.

\subsection{Cutoff for Exponential Noise}
The exponential distribution has a probability density function over the domain $z \in [0, \infty)$ and is defined as
\begin{equation}
f(z) = \lambda e^{-\lambda z},
\label{eq:exp_f_norm}
\end{equation}
with a cumulative distribution function
\begin{equation}
F(z) = 1- e^{-\lambda z},
\label{eq:exp_F_norm}
\end{equation}
where $\lambda$ is the distribution parameter with $\lambda > 0$. This this distribution is also asymmetric, so we use \eqref{cutoff_nonsymmetric} to calculate $C_\alpha$ as
\begin{equation}
C_\alpha = -\frac{1}{\lambda} \ln \left(  [1-\sqrt{\alpha}]^{1/n} - [1-\sqrt{\alpha}]^{2/n}  \right),
\label{eq:cutoff_exp}
\end{equation}
where $\lambda$ is the only parameter that needs to be provided to calculate the cutoff.


\section{Cutoff and Distribution Parameter Estimation Method} \label{sec:stats_param_estimation}

If the distribution parameter is know ($\sigma$ for Gaussian distributions, $\Delta$ for uniform distributions, $\sigma$ for Rayleigh distributions, and $\lambda$ for exponential distributions), then the cutoff $C_\alpha$ using our ANAPT method can be
calculated simply with the use of the correct cutoff equation in
\secref{noise_models_applied} and the subsequent analysis may be
skipped. However, in most real-world time series it is uncommon to know what this parameter is and thus it needs to be estimated. 
While there are some methods for estimating the additive noise parameters~\cite{Czesla2018, Urbanowicz2003, Hu2004}, we introduce a new method utilizing the relationship between the sublevel set lifetimes from both the signal and noise and the additive noise distribution~parameters.

To generate a theoretical relationship between the mean lifetime $\mu_L$ and the
distribution parameters, we recall \eqref{muL_expression_final}:
\begin{equation*}
    {\mu_L = 3\int_{-\infty}^{\infty} F(z)\left[ 1-F(z) \right] dz.}
\end{equation*}
In the subsequent subsections, we show how this relationship is used for each of
the four noise models analyzed in this work. However, when the signal is not
pure noise, which would be the case for any informative time series, the mean
lifetime is heavily influenced from the lifetimes associated to significant
features. To address this issue, we instead calculate the median of the
lifetimes as it is robust up to $50\%$ outliers and apply a signal compensation. In this sense outliers are referring to the persistence pairs associated to signal that are greater than the cutoff. This brings up an assumption for this
distribution parameter estimation method to function correctly: the number of
persistence diagram features associated with noise $N_n$ must be equal to or
greater than the number of features from the signal $N_s$. Additionally, when $N_n$ approaches $N_s$ the cutoff becomes
more conservative due to the robustness limitation of the median. To minimize this effect, in
\secref{signal_compensation}, we develop a numeric compensation
multiplier which uses the persistence pairs associated to both additive noise and signal. In general, the condition for $N_s < N_n$ is met if the time
series is sampled at a rate sufficiently higher than the Nyquist sampling
criteria $f_{\rm Nyquist}$ and, of course, the time series has some additive
noise. If these conditions are not met, we suggest the use of an
alternative method to estimate the distribution parameter of the additive noise
and apply its associated cutoff equation in
\secref{noise_models_applied}.

For a symmetric distribution of the lifetimes, the median would be an accurate
estimate of the mean. However, for most additive noise distributions (e.g. Gaussian), the distribution of the resulting sublevel set
persistence lifetimes is not symmetric. Therefore, we resort to approximating the relationship between the
mean and median numerically. While there are methods to estimate the mean using
the median and Inter-Quartile Range (IQR) as described in~\cite{Wan2014}. This
method is only robust for up to $25\%$ outliers due to the $Q_3$ upper quartile. Therefore, we use the numerically approximated
ratios of $\rho = \bar{L} / \tilde{L}$ as provided in Table~\ref{tab:r_ratios} for
each of the four distributions investigated, where $\bar{L}$ is the sample mean
lifetime and $\tilde{L}$ is the sample median lifetime. For each of these
numeric estimates a time series of length $10^5$ was used. This numeric
experiment was repeated ten times to provide a mean $\rho$ with uncertainty. This ratio can be used to estimate the mean lifetime as $\bar{L} \approx \rho\tilde{L}$.
\begin{table}[h]
\centering
\caption{Ratios $\rho = \bar{L} / \tilde{L}$ for estimating sample mean from the sample median with uncertainty as three standard deviations}
\vspace{1.5mm}
\begin{tabular}{c|cccc}
Distribution & Gussian & Uniform & Rayleigh & Exponential \\ \hline
$\rho = \bar{L}/\tilde{L}$ & $1.154 \pm 0.012$  & $1.000 \pm 0.010$ & $1.136 \pm 0.013$ & $1.265 \pm 0.016$ \\
\end{tabular}
\label{tab:r_ratios}
\end{table}

\subsection{Relating the distribution Statistic to the Median Lifetime}
\label{ssec:median_lifetime_to_statistic}
We now apply \eqref{muL_expression_final} and $\rho$ from Table~\ref{tab:r_ratios}
to find relationships between the median lifetime $M_L$ and the distribution
parameter used in each distribution's cutoff equation.

\mypara{Normal Distribution:}
For estimating $\sigma$ of the Gaussian distribution, we use
\eqref{muL_expression_final} and the Gaussian cumulative distribution to
estimate $\mu_L$ as a function of $\sigma$. Specifically, by numerically
approximating the integral in \eqref{muL_expression_final} using $z\in [-10,
10]$ with $10^6$ uniformly spaced samples, we found the relationship
\begin{equation}
\sigma \approx \frac{\mu_L}{1.6925}.
\label{eq:normal_rel}
\end{equation}
We then use $\rho$ to have Eq.~\eqref{normal_rel} as a function of the median lifetime $M_L$ as
\begin{equation}
\sigma \approx \frac{\rho M_L}{1.692} \approx 0.680M_L,
\label{eq:normal_relationship}
\end{equation}
where $M_L$ is the median lifetime. Applying this result to
\eqref{cutoff_normal} allows for a cutoff to be calculate as
\begin{equation}
    C_\alpha \approx 1.923 \tilde{L} \: {\rm erf}^{-1}\left[2(1-\sqrt{\alpha})^{1/n}
    - 1\right],
    \label{eq:cutoff_est_normal}
\end{equation}
where $\tilde{L}$ is the sample median lifetime.

\mypara{Uniform Distribution}
Next, we apply \eqref{muL_expression_final} to the uniform cumulative
distribution to estimate $\mu_L$ as a function of $\Delta$. Substituting
\eqref{unif_F_norm_changed} into  \eqref{muL_expression_final} results in
\begin{equation*}
    \mu_L = 3\int_{-\Delta/2}^{\Delta/2} \frac{2z + \Delta}{2\Delta}\left[
        1-\frac{2z + \Delta}{2\Delta} \right] dz.
\end{equation*}
Expanding and solving this integral results in the relationship
\begin{equation}
    \mu_L = \frac{\Delta}{2} \implies \Delta = 2\mu_L = 2M_L.
    \label{eq:uniform_relationship}
\end{equation}
Applying this result to \eqref{cutoff_uniform} allows for a cutoff to be calculate as
\begin{equation}
    C_\alpha = 2 \tilde{L} \left[ {2\left(1-\sqrt{\alpha}\right)}^{1/n} - 1 \right].
    \label{eq:cutoff_est_uniform}
\end{equation}

\mypara{Rayleigh Distribution}
For estimating $\sigma$ of the Rayleigh distribution, we again use
\eqref{muL_expression_final} with the cumulative Rayleigh distribution to numerically
estimate the relationship between $\mu_L$ and $\sigma$ as
\begin{equation}
\sigma \approx \frac{\mu_L}{1.102} \approx \frac{\rho M_L}{1.102} \approx 1.025 M_L,
\label{eq:rayleigh_relationship}
\end{equation}
where the integral in \eqref{muL_expression_final} was numerically approximated
using $z\in [0, 20]$ with~$10^6$ uniformly spaced samples.
Applying this result to \eqref{cutoff_rayleigh} allows for a cutoff to be calculate as
\begin{equation}
C_\alpha \approx 1.025 \tilde{L} \left( \sqrt{-2 \ln\left([1-\sqrt{\alpha}]^{1/n}\right)} - \sqrt{-2 \ln\left(1-[1-\sqrt{\alpha}]^{1/n}\right)}  \right).
\label{eq:cutoff_est_ray}
\end{equation}

\mypara{Exponential Distribution}
Next, we apply \eqref{muL_expression_final} to the exponential cumulative
distribution function to estimate $\mu_L$ as a function of $\lambda$.
Substituting \eqref{exp_F_norm} into  \eqref{muL_expression_final} results in
\begin{equation}
\mu_L = 3\int_{0}^{\infty} \left(1-e^{-\lambda z}\right)e^{-\lambda z} dz,
\end{equation}
which was solved using a $u$-substitution as
\begin{equation}
\mu_L = \frac{3}{2\lambda} \rightarrow \lambda = \frac{3}{2\mu_L} .
\label{eq:uniform_rel}
\end{equation}
By then using the appropriate $\rho$ from Table~\ref{tab:r_ratios} to use $M_L$
instead of $\mu_L$, we approximate $\lambda$ from the median lifetime:
\begin{equation}
\lambda \approx \frac{1.875}{M_L}.
\label{eq:uniform_relationship}
\end{equation}
Applying this result to \eqref{cutoff_exp} allows for a cutoff to be calculate as
\begin{equation}
C_\alpha \approx -0.533{\tilde{L}} \ln \left(  [1-\sqrt{\alpha}]^{1/n} - [1-\sqrt{\alpha}]^{2/n}  \right).
\label{eq:cutoff_est_exp}
\end{equation}

\section{Signal Compensation for the Cutoff and Distribution Parameter} \label{sec:signal_compensation}
In this section, we discuss the effects of signal on the cutoff estimation methods described.
In \ssecref{median_lifetime_to_statistic} we assumed that the time
series was of the form $\epsilon = \{ \epsilon_1, \epsilon_2, \ldots, \epsilon_n \} \stackrel{iid}{\sim} \mathcal{N}$. However, in practice we typically
have some underlying informative signal $s \colon \R \to \R$ and have a
time series of the form $x(t) = s(t) + \epsilon$ with a finite domain as $t \in [t_a, t_b]$.
The resulting sublevel sets from $s(t) + \epsilon$ are assumed to have some lifetimes from $s(t)$ with the slope of the signal having an effect on the
lifetimes associated with~$\mathcal{N}$.
Because of these effects, we attempt to compensate the cutoff calculation and
distribution parameter estimations for these effects for a general signal. Since
a general signal is, in practice, rather subjective, we move away from a
theoretical analysis of the signal and rather analyze the effects of the signal
experimentally.
We have partially addressed this issue of signal compensation by implementing the median lifetime $M_L$ instead of the mean lifetime $\mu_L$ with the median being an outlier robust statistic for up to 50\% outliers.
Even with the use of the median, we need to further develop a signal compensation procedure to improve the accuracy of the suggested cutoff.

\begin{figure}[h]
    \centering
    \includegraphics[scale = 0.29]{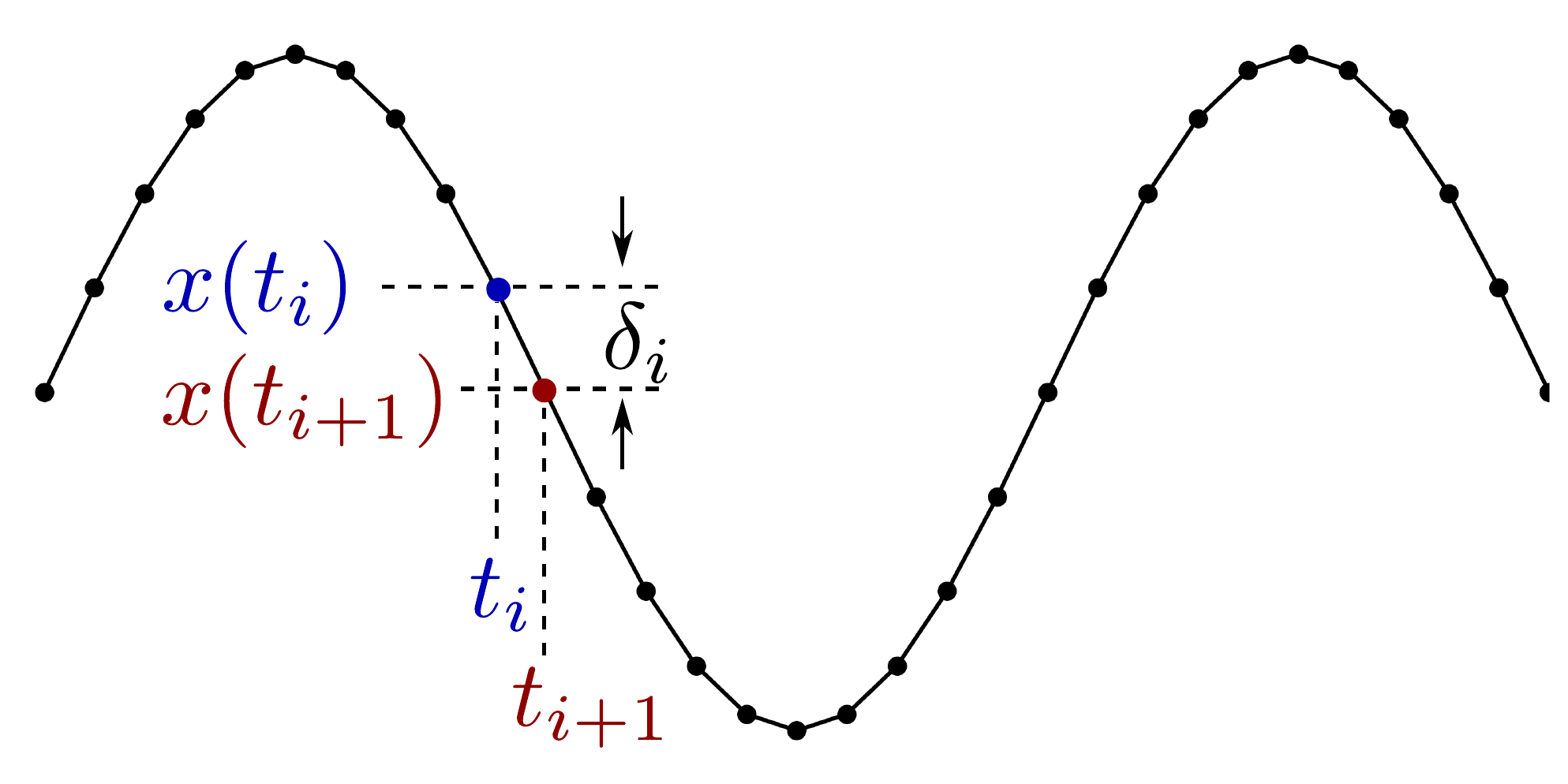}
    \caption{Example time series showing sample $\delta_i$.}
    \label{fig:example_delta_i_fig}
\end{figure}
To fully understand the effects of signal on estimating the cutoff, we do a
numeric study to develop a method for adjusting the median lifetime such that
$R M_L(s(t) +\epsilon) \approx M_L(\mathcal{N})$, where $R$ is the compensation term.
This analysis requires a new variable which we term~$\delta$ as the
median step size with $\delta_i = x(t_{i+1}) - x(t_i)$ as shown in
\figref{example_delta_i_fig}, where $x(t)$ is a discretely and uniformly
sampled signal with a constant sampling rate $f_s$.
We now experimentally approximate the effects of signal on the median lifetime
by using three ``generic" signals suggested by~\cite{Wood1996} as
\begin{equation}
    f_1(t) = t - t^3/3,
\end{equation}
with $t \in [3.1, 20.4]$ and sampling rate $f_s = 20$ Hz,
\begin{equation}
    f_2(t) = \sin(t) + \sin(2t/3),
\end{equation}
with $t \in [3.1, 20.4]$ and sampling rate $f_s = 20$ Hz,
and
\begin{equation}
    f_3(t) = -\sum_{i=1}^5 \sin((i+1)t + i),
\end{equation}
with $t \in [-10, 10]$ and sampling rate $f_s = 20$ Hz.
Additionally, additive noise is included in the signal with $s(t) = A f(t) +
\epsilon$ with the additive noise distribution parameter set to one (e.g.
$\sigma = 1$ for Gaussian) and signal amplitude~$A$
increment by unit steps starting from zero such that the $\delta$ is also incremented until reaching a value $\delta/\sigma =2$. At each $\delta$ we calculate the median
lifetime $\tilde{L}$ for 100 trials to provide a mean $\tilde{L}$ with
uncertainty $u_L$ as one standard deviation (see
\figref{fitting_to_sine_median_lifetime_Gaussian} for the Gaussian
additive noise example).

Our goal is to find a function to approximately fit this relationship
between $\delta$ and $\tilde{L}$ for each distribution type. By observation of
the median lifetimes in \figref{fitting_to_sine_median_lifetime_Gaussian},
we experimentally found an approximate template function:
\begin{equation}
\tilde{L}^* = \tilde{L}_0 e^{-c_1{\left(\frac{\delta}{\delta+\tilde{L}}\right)}^{c_2}},
\label{eq:fitted_L_star}
\end{equation}
where $\tilde{L}_0$ is the median lifetime when $\delta = 0$ or when the signal
is just additive noise $\mathcal{N}$.

\begin{figure}[h]
    \centering
    \includegraphics[width = \textwidth, center]{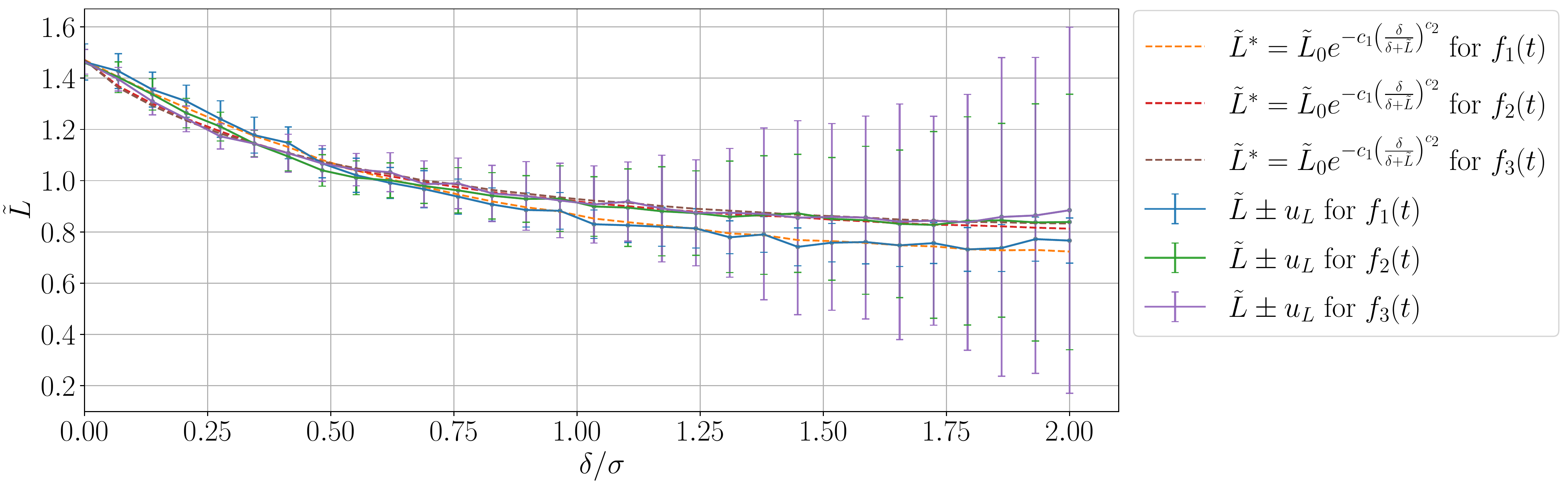}
    \centering
    \caption{Numeric function fitting of \eqref{fitted_L_star} to the mean of the median lifetime $\tilde{L}$ of $f_i(t)$ for $i \in [1,3]$ where $\mathcal{N}$ is unit variance Gaussian additive noise with $\delta \in [0,2]$ being incremented to understand the effects of signal on the median lifetime.}
    \label{fig:fitting_to_sine_median_lifetime_Gaussian}
\end{figure}

As shown in \figref{fitting_to_sine_median_lifetime_Gaussian}, the fitted function shows a very similar quality to the numerically simulated means of the median lifetimes when the two constants in \eqref{fitted_L_star} were set to $c_1 \approx 0.845$ and $c_2 \approx 0.809$ for a Gaussian additive noise, which were chosen using the BFGS minimization of the $\ell_2$ norm cost function on the residuals when fitting to $\tilde{L}$ for all three generic functions.
Another characteristic of these constants is that they are approximately independent of the additive noise distribution parameter, sampling frequency, and time series.
The two constants from \eqref{fitted_L_star} are provided in Table~\ref{tab:constants_from_fitting} for the four distributions investigated in this work.
\begin{table}[h]
\centering
\caption{Constants of \eqref{fitted_L_star} for each distribution type
    investigated in this work with associated uncertainty from ten trials.}
\vspace{1.5mm}
\begin{tabular}{c|cccc}
Distribution & Gussian & Uniform & Rayleigh & Exponential \\ \hline
$c_1$ & $0.845 \pm 0.029$ & $0.880 \pm 0.017$ & $0.726 \pm 0.026$ & $0.436 \pm 0.036$ \\
$c_2$ & $0.809 \pm 0.061$ & $0.639 \pm 0.026$ & $0.605 \pm 0.054$ & $0.393 \pm 0.075$ \\
\end{tabular}
\label{tab:constants_from_fitting}
\end{table}
With these constants, we calculate a multiplication compensation term for the signal as $R$, which is calculated from \eqref{fitted_L_star} as
\begin{equation}
R =\frac{\tilde{L}_0}{\tilde{L}^*} = e^{c_1{\left(\frac{\delta}{\delta+\tilde{L},}\right)}^{c_2}}
\label{eq:R_equation}
\end{equation}
which is used to compensate for the effects of signal with $C_\alpha^* = R C_\alpha$ and $\sigma^* = R \sigma$.

Unfortunately, when $s(t)$ is unknown, the $\delta$ parameter used in
\eqref{R_equation} can no longer be directly calculated from the time series or
sublevel set persistence diagram. To approximate $\delta$ we use the lifetimes
greater than the initial uncompensated cutoff $C_\alpha$ as
\begin{equation}
\delta \approx \frac{2}{n}\sum L_{C_\alpha},
\label{eq:Delta_approx}
\end{equation}
where $L_{C_\alpha}$ are the lifetimes greater than $C_\alpha$.

To validate the accuracy of \eqref{R_equation} with $\delta$ approximated from \eqref{Delta_approx} we estimate $\sigma$ with and without the signal compensation $R$ from \eqref{R_equation}, we use a new time series $x(t) = A\sin(\pi t) +\epsilon$ with $\mathcal{N}$ being a Gaussian distribution with unit variance and $A$ incremented to change $\delta \in [0,2]$ for $100$ trials at each~$\delta$. As shown in
\figref{scale_amp_with_param_approx_comp_to_old_and_sine_and_slope_Gaussian},
the true $\sigma = 1$ and the estimated $\sigma$ without compensation from
\eqref{normal_relationship} shows an underestimate as $\delta$ increases until
plateauing around $\delta/\sigma \approx 1$, which would cause for a cutoff that
may not capture all of the lifetimes associated with noise. However, the signal
compensated distribution parameter $\sigma^*$ shows an accurate estimation of
$\sigma$ even as $\delta$ becomes significantly large. This example demonstrates
the importance of signal compensation for an accurate cutoff and distribution
parameter estimation.

\begin{figure}[h]
    \centering
    \includegraphics[width = \textwidth, center]{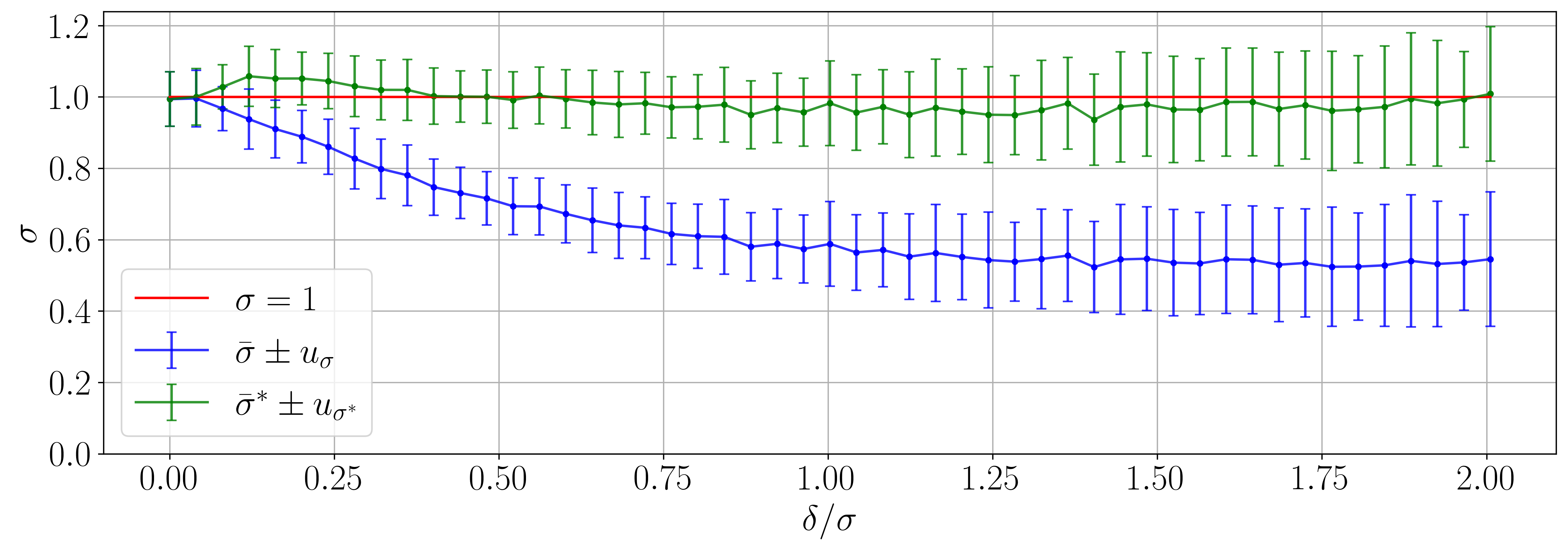}
    \centering
    \caption{Demonstration of distribution parameter $\sigma$ estimation of Gaussian additive noise in $x(t) = A\sin(\pi t) + \mathcal{N}$ using the median lifetime with and without signal compensation as $\sigma$ and $\sigma^*$, respectively.}
    \label{fig:scale_amp_with_param_approx_comp_to_old_and_sine_and_slope_Gaussian}
\end{figure}


\section{Comparisons and Examples} \label{sec:examples}
We first make a comparison between our ANAPT method for estimating a suitable cutoff in the sublevel set persistence diagram to two common approaches. Namely, persistent entropy for labeling persistence pairs as signal or noise based on their relative entropy and bootstrap resampling to generate a cutoff based on the bottleneck distance between persistence diagrams.
In our second analysis we experimentally validate our ANAPT method for three examples: (1) a numeric
simulation of the noise types with their associated cutoffs calculated;~(2) a
numerically generated quasiperiodic time series with additive noise; and~(3)~experimental data from a pendulum with natural noise of an unknown distribution.
These three examples demonstrate the wide breadth of noise distributions for
selecting a cutoff, the effects of signal in estimating an appropriate cutoff,
and the ability for the cutoff estimation method to function for experimental
data with an unknown distribution~type.

\subsection{Comparison to Standard Methods} \label{ssec:comparison}
In this section, we analyze the effectiveness of our ANAPT method
of the sublevel sets for estimating a cutoff compared to other standard
techniques for separating topological noise from signal.
For our first comparison we separate topological noise from signal using
persistent entropy~\cite{Rucco2016,Atienza2017}, bootstrap resampling of the bottleneck
distance~\cite{Fasy2014}, and ANAPT methods for an
example~signal. 

\mypara{Persistent Entropy}
Persistent entropy uses the Shannon entropy of the lifetimes from a persistence
diagram to measure the significance of a persistence pair~\cite{Rucco2016}. By comparing the
entropy of a modified set of lifetimes to a relative entropy level, the
identification of a feature as either noise or signal is determined.
Details on how to implement persistent entropy for separating signal from
topological noise can be found in ~\cite{Atienza2017}.

\mypara{Bootstrap Resampling}
Bootstrap resampling is based on resampling a dataset with replacement; here, we
bootstrap residuals to estimate the additive noise. Since we are working with dependent
time series data, we use a frequency domain function fitting method to
calculate these residuals.
The bootstrap resampling procedure is as follows:
\begin{enumerate}
    \item The data is a sequence of real-valued observations, which we can model
        as a function~$x \colon \{t_1,t_2, \ldots, t_n\} \to \R$. For
        each $t\in \{ t_1, t_2,\ldots t_n\}$, this reading is actually a sum of a true
        underlying signal and some (zero-centered) additive noise: $x(t) = s(t) + \epsilon$.
        However, we do not know
        exactly what $s(t)$ and $\epsilon$ are individually.
    \item Compute $\dgm{x}$.
    \item Filter the signal using a Butterworth low-pass filter
        of the Fast Fourier Transform (FFT) with a frequency
        threshold of twice the sampling rate. The resulting filtered signal $\hat{s}(t)$ is an approximation of $s(t)$.
    \item For each $t$, set $\hat{\epsilon}(t) = \hat{s}(t) - x(t)$ as an estimate of $\epsilon$
    \item Sampling over all $\hat{\epsilon}$ with replacement, we can obtain:
        $\epsilon^*_1, \epsilon^*_2, \ldots,  \epsilon^*_n$  (our
        bootstrapped noise).\label{step:get-noise}
    \item Our new signal is $x^*(t) = \hat{s}(t) +
        \epsilon^*$.\label{step:new-signal}
    \item Compute the persistence diagram $\dgm{x^*}$.
    \item Compute the bottleneck distance between $\dgm{x}$ and
        $\dgm{x^*}$.\label{step:bootstrap-dist}
    \item Repeat Steps~\ref{step:get-noise} through~\ref{step:bootstrap-dist} to
        get $N$ bootstrapped distances. (In our experiments, we used $N=1000$.
\end{enumerate}

At the end of this procedure, we have a set of $N$ bootstrapped distances
between the persistence diagram computed from the data, $\dgm{x}$, and the
diagrams obtained through the bootstrap; let~$\{d_1^*, d_2^*, \ldots, d_{N}^*
\}$ be this set of distances.  Letting $\alpha=0.05$, let $\bar{d}^*_{95\%}$
denote the $95\%$-ile of these distances.  Then, with $95\%$ confidence, we can
say that $d(\dgm{x},\dgm{s}) \leq \bar{d}^*_{0.05}$.  Hence, for each point in
$\dgm{x}$, we can classify it as either \emph{true signal} or
\emph{indistinguishable from noise} by thresholding the liftetime of the points
by $2 \bar{d}^*_{0.05}$.\footnote{The factor of $2$ appears, since the lifetime
of a persistence point is half of its $L_{\infty}$-distance to the diagonal.} It should be noted that for each resampling the end conditions are held constant to make the bottleneck distance correspond to additive noise only.

\mypara{Data}
The example signal used to analyze the effectiveness of each method is a chaotic
solution to the Lorenz system
\begin{equation}
    \frac{dx}{dt}   = \sigma (y-x), \: \frac{dy}{dt}   = x (\rho -z) - y, \: \frac{dz}{dt}   = xy - \beta z.
    \label{eq:lorenz}
\end{equation}
The Lorenz system was sampled at a rate of 200 Hz for 200 seconds with system
parameters~$\sigma = 10.0$, $\beta = 8.0 / 3.0$, and $\rho = 181.0$. Only the
last 2500 samples or 12.5 seconds of the signal were used to avoid transients.
Additive Gaussian noise with a Signal to Noise Ratio (SNR) of 23 dB was used to
contaminate the signal. For our analysis, we used the time series $x(t)$ as shown
in the top of \figref{modes_of_failure_for_other_methods}.

\begin{figure}[h]
    \centering
    \includegraphics[width = \textwidth, center]{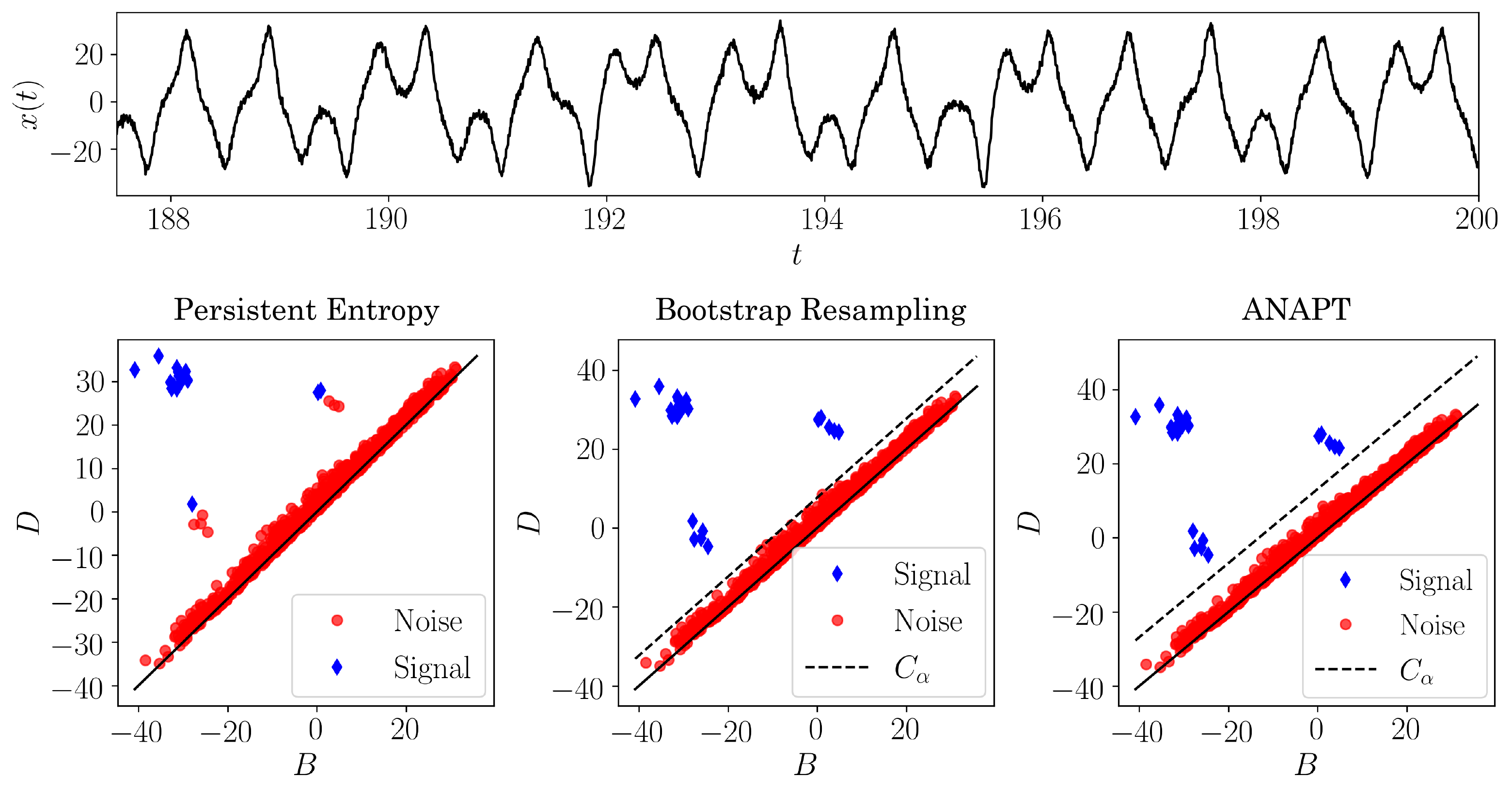}
    \caption{Comparison between the persistent homology, bootstrap resampling,
    and ANAPT methods for separating noise from
    topological signal in the sublevel set persistence diagram. The example
    signal is generated from a chaotic Lorenz system.}
    \label{fig:modes_of_failure_for_other_methods}
\end{figure}

\mypara{Comparison}
\figref{modes_of_failure_for_other_methods} shows that the persistent entropy
method (bottom left subfigure) does not accurately separate all of the
persistence pairs associated to noise from those of signal. This is due to its
design being for persistence diagrams with relatively few significant points in
the persistence diagram. However, we empirically observed that the persistent entropy method has a limitation on the ratio of persistence pairs associated to signal to those associated to noise as demonstrated in Fig.~\ref{fig:modes_of_failure_for_other_methods}. This reduces its functionality for persistence diagrams where the number of features associated to signal is close to the number associated to noise.

The second method, bootstrap resampling (bottom center
subfigure in \figref{modes_of_failure_for_other_methods}), has a resulting cutoff ($C \approx 7.70$) that narrowly encapsulates all of the features associated to noise. 
We hypothesize that this is due to the bottleneck distance minimum pairing between persistence diagrams is not necessarily matching persistence pairs that are associated but rather the closest persistence pair. 
Unfortunately, the bootstrap method is also significantly more computationally demanding than the other techniques. This is due to the need to calculate the bottleneck distance for each persistence diagram pair and the need to calculate the sublevel set persistence for the number of resamplings desired (1000 for our example).  

Based on the results in \figref{modes_of_failure_for_other_methods}, it is clear that the ANAPT method is optimal for estimating a cutoff for seperating noise from
topological signal. Our ANAPT method accurately seperates all persistence pairs associated to noise from signal with a cutoff $C_\alpha \approx 12.96$. This is in comparison to the optimal cutoff of $C_\alpha \approx 12.70$ based on knowing $\sigma$ and the theory in Section~\ref{ssec:extrema_distributions_for_cutoff}.

\begin{figure}[h]
    \centering
    \includegraphics[width = 0.9\textwidth, center]{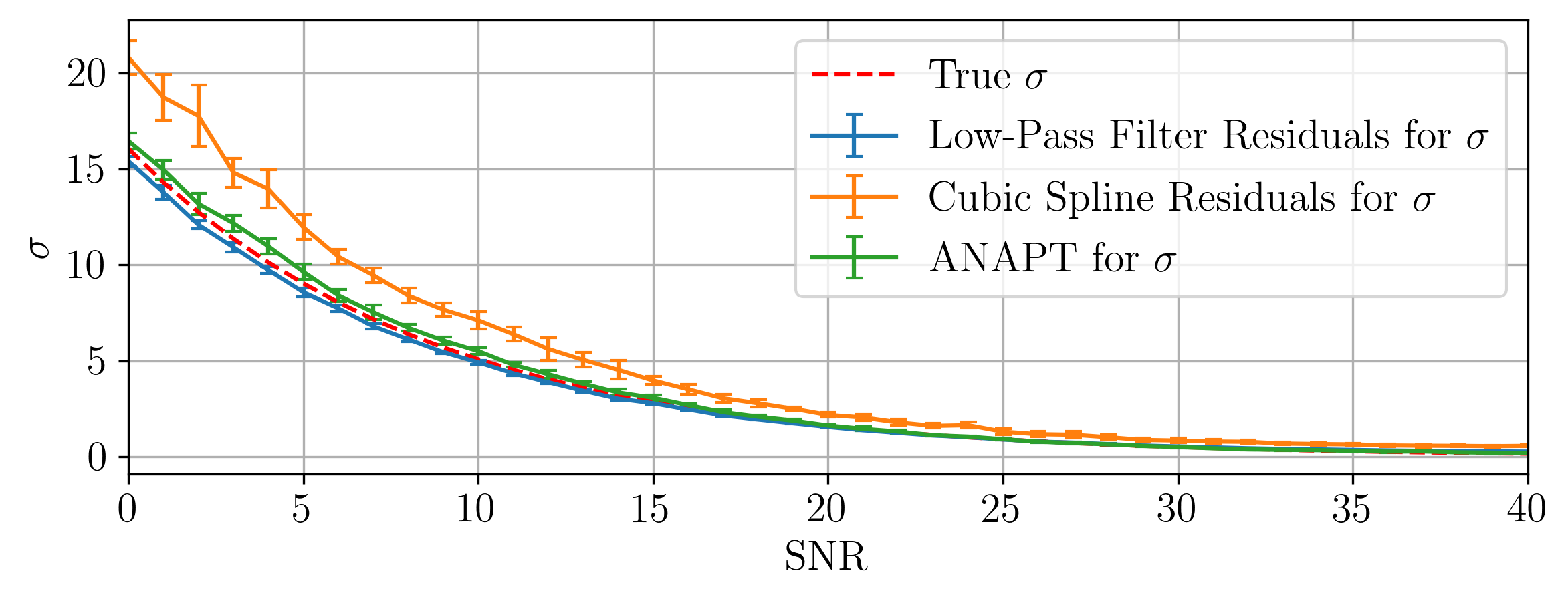}
    \caption{Comparison of distribution parameter estimation techniques (low-Pass filter residuals, cubic spline residuals, and sublevel set persistence) for estimating $\sigma$ of the additive Gaussian noise contaminating the $x(t)$ solution of the Lorenz system in Eq.~\ref{eq:lorenz}.}
    \label{fig:sigma_estimation_methods}
\end{figure}

\mypara{Estimating Standard Deviation of (Gaussian) Additive Noise}

To make further conclusions on the comparative performance of our ANAPT method, we evaluate residual based methods for estimating the standard deviation of the additive Gaussian noise. 
The two methods we implement are a low-pass Butterworth Filter with a frequency cutoff of twice the sampling frequency and a cubic spline fit on a down sampled version of the signal for estimating the residuals from the noisy signal.
The downsampling rate for the cubic spline fitting was determined such that the time delay estimated from the autocorrelation function~\cite{Box2015} is two, which avoids aliasing and over fitting.
To make comparisons between the residual based methods and the ANAPT method we again use the chaotic Lorenz signal from \eqref{lorenz}. 
However, we vary the SNR from $0$ to $40$~dB for
Gaussian additive noise. An SNR of~$0$~dB represents a noise amplitude equivalent
to the amplitude of the signal, while a value of~$40$~dB represents a relatively
low amount of additive noise. Our results are shown in
\figref{sigma_estimation_methods}. For each SNR 10 trials are run with the
mean and standard deviation as the error~bars.
As demonstrated in \figref{sigma_estimation_methods}, the cubic spline
method for function fitting and analyzing the residuals tends to overestimate
$\sigma$. In comparison, both the low-pass Butterworth filter and the ANAPT methods closely estimate $\sigma$. However the low-pass
Butterworth filter tends to slightly underestimate $\sigma$. This
underestimation can cause for a less conservative cutoff in comparison to the
slightly overestimated $\sigma$ from the ANAPT method. In our
opinion, either the low-pass filter or the ANAPT methods
could be used, but due to the additional computation of filtering the signal, we
suggest using our ANAPT method. It could be beneficial to
calculate both~$\sigma$ estimates to verify that the estimate is accurate.

\subsection{Numerically Simulated Noise Models}

The first example is of time series of only additive noise such that $\epsilon = \{ \epsilon_1, \epsilon_2, \ldots, \epsilon_n \} \stackrel{iid}{\sim} \mathcal{N}$. For this example, we simply
calculate the cutoff from the distributions corresponding cutoff Equation
(\eqref{cutoff_normal},~\eqref{cutoff_uniform},~\eqref{cutoff_rayleigh},
and~\eqref{cutoff_exp}) and show that the cutoff is greater than all sublevel
set lifetimes (see \figref{histos_with_cutoff}).
For this example, the distribution parameters are $\sigma = \Delta = \lambda = \sigma = 1$ for
the Gaussian, uniform, Rayleigh, and exponential distributions, respectively.
\figref{histos_with_cutoff} shows the histogram estimates of the probability
densities of the additive noise types on the top row, and on the bottom row are
the approximate probability distributions of the corresponding lifetimes. For
each of the lifetime distributions, the calculated cutoff~$C_\alpha$ is shown, where $\alpha = 0.001$ and $n = 10^5$ data points.

\begin{figure}[h]
    \centering
    \includegraphics[width = \textwidth, center]{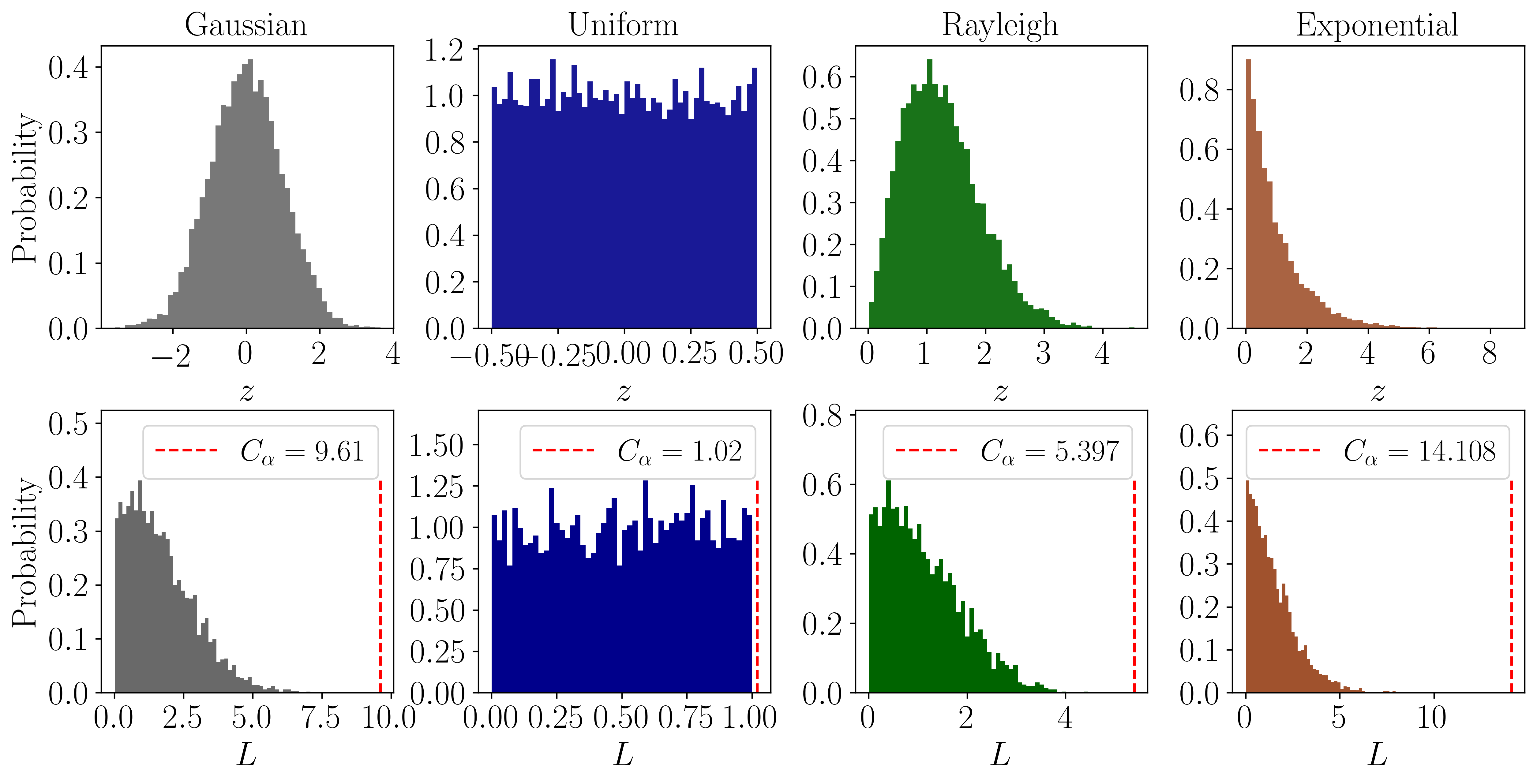}
    \caption{Numerical simulations with $10^5$ data points for each of the four
    investigated noise models with probability densities on the top row and
    lifetime probability densities on the bottom row with their associated
    cutoff $C_\alpha$. The distribution parameters were set as $\sigma = \Delta
    = \lambda = \sigma = 1$ and were estimated as $\sigma \approx 0.996$, $\Delta \approx
    1.013$, $\lambda \approx 1.009$, and $\sigma \approx 0.998$ for the Gaussian, uniform,
    Rayleigh, and exponential distributions, respectively}
    \label{fig:histos_with_cutoff}
\end{figure}

As shown, the cutoff is greater than the maximum lifetime in the probability
density histograms of the lifetimes for each type of additive noise
investigated, which demonstrates that this method for selecting a cutoff using
the statistics of the sublevel set lifetimes is suitable for selecting a cutoff
if the distribution is known. However, this example does not show what effects
dependency in the time series has on the distribution. In the next example, we examine the effect of a dependency or signal in the time series on the cutoff calculation and its accuracy.

\subsection{Numerically Simulated Signal with Additive Noise}
The second example used is a sinusoidal time series with additive
noise (see left column of \figref{signal_with_noise}) calculated~as
\begin{equation}
x(t) = A(\sin(\pi t) + \sin(t)) + \epsilon,
\label{eq:example_signal}
\end{equation}
where $A = 10$ is the signal amplitude, $\epsilon$ is additive noise with a
normal distribution with zero mean $\mu = 0$ and standard deviation $\sigma \in [0.2, 1, 2, 4]$, and $t \in [0,15]$ with a sampling rate of $f_s = 40$~Hz.
\begin{figure}[h]
    \centering
    \includegraphics[width = 0.7\textwidth, center]{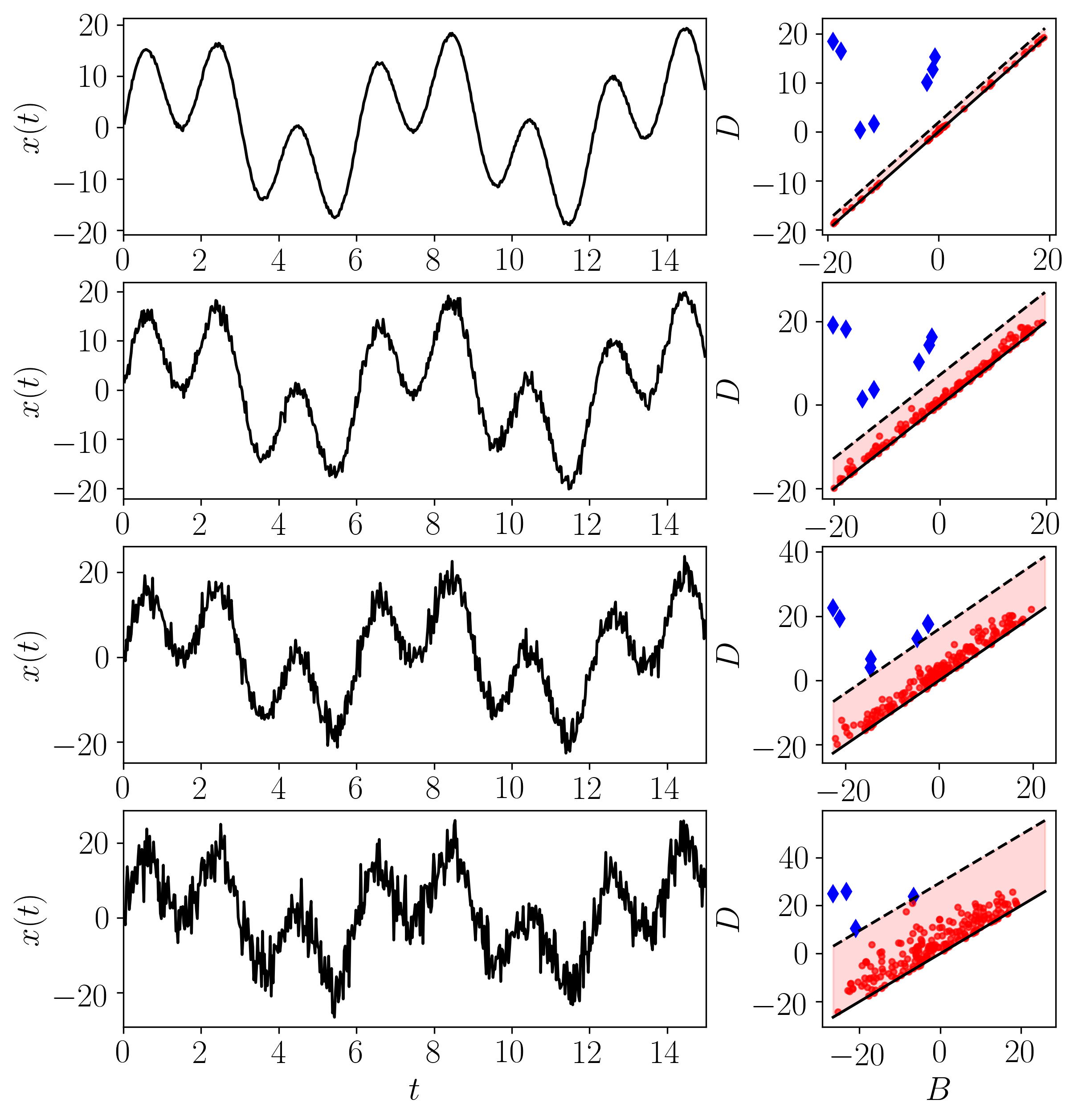}
    \caption{Example cutoff calculation for time series $x(t) = A(\sin(\pi t) +
    \sin(t)) + \epsilon$, where $\epsilon$ is additive noise from a
    Gaussian distribution with zero mean and four different standard deviations as $\sigma \in [0.2, 1, 2, 4]$. The
    resulting sublevel set persistence diagrams with cutoff $C_\alpha^*$ are shown to the right.}
    \label{fig:signal_with_noise}
\end{figure}

For a signal like this there is a need to multiply by a compensation term to the
cutoff and distribution parameter as discussed in
\secref{signal_compensation}.
This is due to the slope of the function reducing the lifetimes associated to
additive noise and the effect of lifetimes associated to signal.
For the signal with $\sigma =1$ and calculating the median lifetime and setting $\alpha= 0.001$, we calculate a
cutoff $C_\alpha \approx 5.36$ and estimated $\sigma \approx 0.751$ for this
example.
However, if we apply the compensation multiplier $R$ from \eqref{R_equation} and
approximate delta from \eqref{Delta_approx} as $\Delta \approx 0.953$ we find a
cutoff $C_\alpha^* \approx 7.54$ and $\sigma^* \approx 1.05$, which is a much
more accurate cutoff and approximate distribution parameter with the actual
$\sigma = 1$.
Similar improvements were found for the other noise levels when incorporating signal compensation. 

As shown in the example with $\sigma = 4$ at the bottom of Fig.~\ref{fig:signal_with_noise}, the cutoff will eventually encapsulate persistence pairs associated to signal if the amplitude of the noise is significant in comparison to the amplitude of the signal.

This example showed how the method for selecting a cutoff is applicable to a
time series with additive noise and an underlying signal that is not simply
sinuisodal, but quasiperiodic.
However, we still knew what the additive noise distribution type was. In our
next example, we look at experimental data, where the noise does not have a known distribution.

\subsection{Experimental Example} \label{ssec:experiment_example}
The experimental data used in this example is from a single pendulum with an
encoder to track the position of the pendulum arm. A full design document for
the pendulum is available through GitHub\footnote{\url{https://github.com/Khasawneh-Lab/simple_pendulum}}.
The data collected is from a free drop response of the pendulum where the amplitude of
the pendulum swing is decreasing due to natural damping mechanisms. The recorded
data from the encoder was translated from a voltage output to radians, but this
calibration and dimensional analysis are not covered in this manuscript, but
rather in the documentation report with a full uncertainty analysis in~\cite{Petrushenko2017}. In
\figref{experiment_signal_with_noise}, a section of the time series sampled at
100 Hz is shown with a subsection highlighted in green when the time series no
longer has significant dependencies such that~$x(t) \approx \mathcal{N}$.
Additionally, in \figref{experiment_signal_with_noise}, the resulting
persistence diagram with the corresponding noise cutoff with and without signal
adjustment are shown.
\begin{figure}[h]
    \centering
    \includegraphics[width = 0.85\textwidth, center]{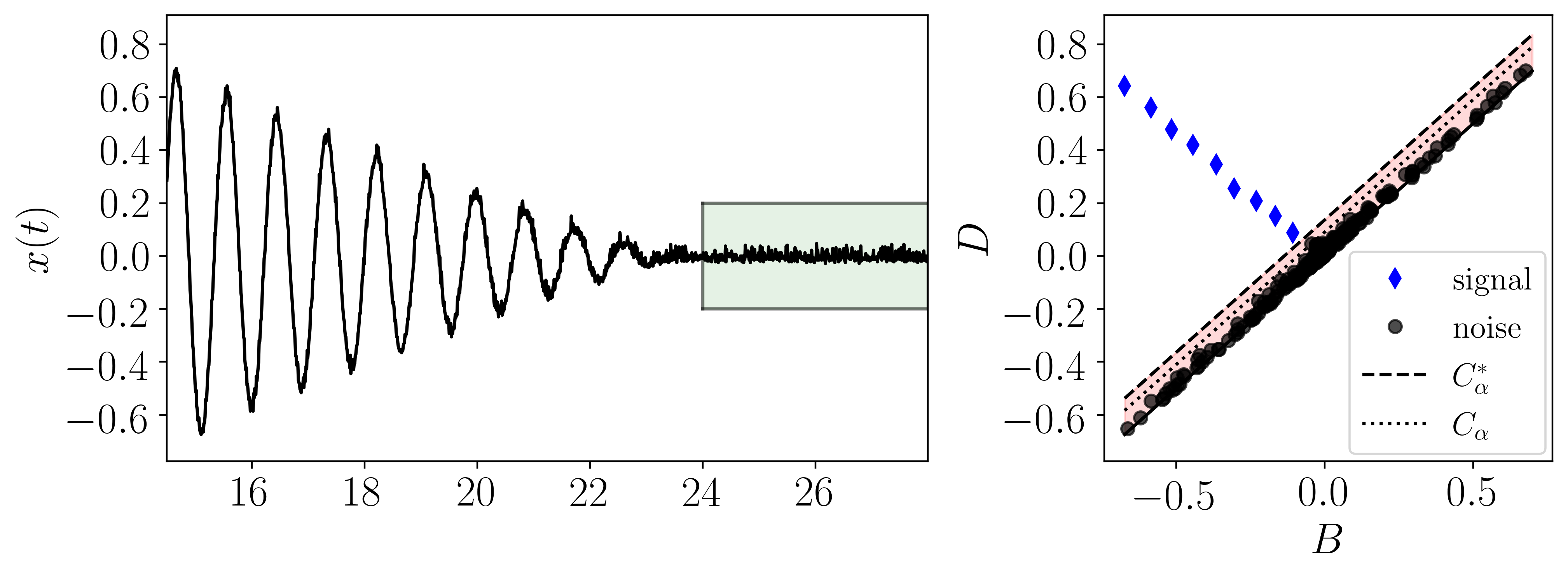}
    \caption{Experimental time series from free drop response of single pendulum
    with corresponding cutoffs in the persistence diagram and time-order
    lifetimes plot.}
    \label{fig:experiment_signal_with_noise}
\end{figure}
To generate this cutoff, an additive noise distribution model is needed. Since the
data was from an experiment this distribution is not exactly known. To better
understand and assume a distribution we used the section of the time series ($t
\in 24, 28]$) highlighted in green in the top left of
\figref{experiment_signal_with_noise} where there is approximately no dependency
in the data such that $x(t) \approx \epsilon$. Using this section of data, we
generated a histogram of the additive noise as shown in \figref{experiment_noise_distribution}.

This distribution, qualitatively, seems to best match a Rayleigh distribution,
which was used to determine the cutoff. The fitted Rayleigh distribution is also
shown in \figref{experiment_noise_distribution}, where $\sigma$ was estimated
from the median lifetime using \eqref{rayleigh_relationship} to estimate $\sigma
\approx 0.028$.
Using this fitted distribution parameter, a suitable cutoff would be $C_\alpha
\approx 0.1293$ using \eqref{cutoff_rayleigh}.
However, we are interested in calculating the cutoff using the entire signal
with $t \in [15, 28]$ as shown in \figref{experiment_signal_with_noise} since it
is not always possible to have a section of time series that does not have
signal dependencies.
If we directly apply \eqref{cutoff_rayleigh} we calculate a cutoff as $C_\alpha
\approx 0.0905$, which is lower than the desired cutoff $C_\alpha \approx
0.1293$. To account for this, we implement the signal compensation discussed in
\secref{signal_compensation} and calculated the cutoff as $C_\alpha^*
\approx 0.1365$ which is slightly conservative, but a better estimate than
without signal compensation.

\begin{figure}[h]
    \centering
    \includegraphics[scale = 0.43, center]{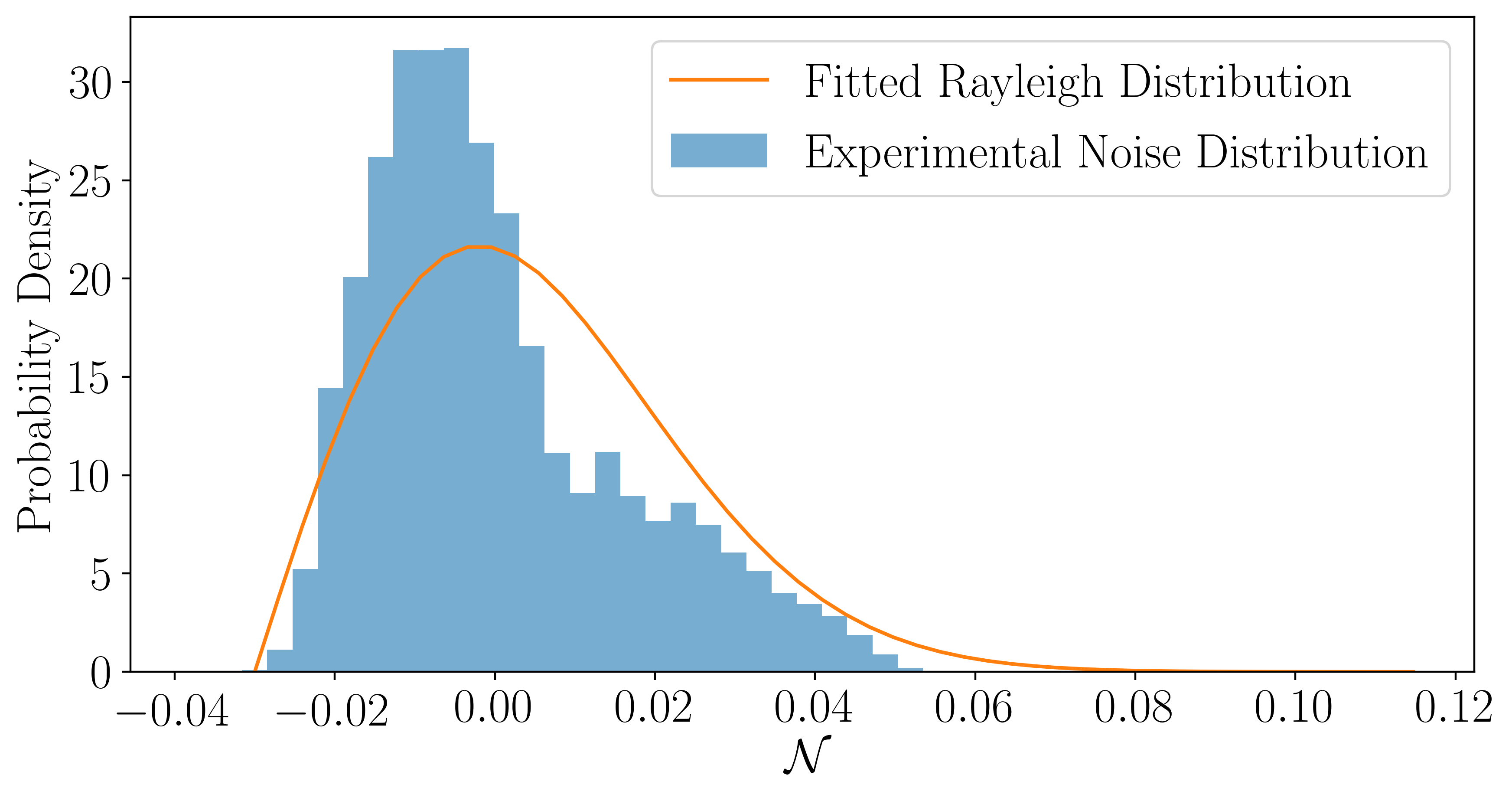}
    \caption{Histogram of the highlighted section of signal in
    \figref{experiment_signal_with_noise} with fitted Rayleigh distribution.}
    \label{fig:experiment_noise_distribution}
\end{figure}

This example highlights the importance of signal compensation in the cutoff
calculation and demonstrates the functionality of the ANAPT cutoff estimation method
for experimental data. Additionally, we showed how one of the analyzed
distributions (Rayleigh) can be used to approximate the distribution of the
additive noise and be used to calculate an accurate cutoff.

\section{Conclusion}
\label{sec:conclusion}

In this work we provided a novel statistical analysis of sublevel set
persistence diagrams from a single variable function. Specifically, we show how
to calculate a cutoff $C_\alpha$ for separating persistence pairs associated to
noise from those of signal in the persistence diagram. This method is based on
an assumption that the form of the underlying probability distribution of the
additive noise is known (e.g. Gaussian distribution). This method has several
advantages to other methods including a reduced computational time in comparison
to resampling techniques since only a single persistence diagram needs to be
calculated, no signal filtration required, widely applicable, and functions with
significant amounts of additive noise. 
Additionally, this method is essentially parameter free with the only parameter being the confidence level $\alpha$, which is suggested to be approximately 0.05. 
These features make ANAPT especially useful for applications such as extrema detection and peak prominence analysis.
While ANAPT does use TDA, it is easy to implement and use with our open-source python code available\footnote{\url{https://github.com/lizliz/teaspoon}}.

To determine the effectiveness of the cutoff, we first showed how ANAPT functions correctly for time series of pure additive noise
(i.e. $x(t) = \mathcal{N}$) with all sublevel set persistence pairs captured by
the cutoff. Next, we tested ANAPT on both an informative, simulated time
series and an experimental time series from a pendulum. For the simulated time
series we added Gaussian additive noise and showed how ANAPT accurately
determined a suitable cutoff by separating the correct sublevel set lifetimes.
For the experimental time series study, we did not know the distribution of the
additive noise and proposed approximating it as the Rayleigh distribution. This
approximation allowed for an accurate calculation of a cutoff and showed that
ANAPT functions even without the exact probability distribution of the
additive noise known.

In the future we plan to investigate a few new directions. The first is to
extend the statistical analysis to sublevel set persistence for multi-variable
functions, where a new theoretical distribution of the birth and death times
needs to be developed. The second improvement would be to develop a new method
to approximate the probability distribution function of the underlying additive
noise to alleviate the need for the distribution to be specified. The challenge
with this new development would be creating a library of distribution functions
to test against, which was out of the scope of this work.

\begin{acknowledgements}
\section{Acknowledgements}
This material is based upon work supported by the National Science Foundation
under grant numbers CMMI-1759823, DMS-1759824, and DMS-1854336.

\end{acknowledgements}

\section*{Conflict of interest}
On behalf of all authors, the corresponding author states that there is no conflict of interest.


\bibliographystyle{plain}
\bibliography{sublevel_statistics_bib}

\begin{thebibliography}{10}

\bibitem{Atienza2017}
Nieves Atienza, Rocio Gonzalez-Diaz, and Matteo Rucco.
\newblock Persistent entropy for separating topological features from noise in
  vietoris-rips complexes.
\newblock {\em Journal of Intelligent Information Systems}, 52(3):637--655, jul
  2017.

\bibitem{Buehlmann1997}
Peter Bühlmann and Peter Buhlmann.
\newblock Sieve bootstrap for time series.
\newblock {\em Bernoulli}, 3(2):123, jun 1997.

\bibitem{Box2015}
George~EP Box, Gwilym~M Jenkins, Gregory~C Reinsel, and Greta~M Ljung.
\newblock {\em Time series analysis: forecasting and control}.
\newblock John Wiley \& Sons, 2015.

\bibitem{Bubenik2015}
Peter Bubenik.
\newblock Statistical topological data analysis using persistence landscapes.
\newblock {\em Journal of Machine Learning Research}, 16:77--102, 2015.

\bibitem{Carlsson2012}
Gunnar Carlsson, Jackson Gorham, Matthew Kahle, and Jeremy Mason.
\newblock Computational topology for configuration spaces of hard disks.
\newblock {\em Physical Review E}, 85(1), jan 2012.

\bibitem{chazal2016structure}
Fr{\'e}d{\'e}ric Chazal, Vin De~Silva, Marc Glisse, and Steve Oudot.
\newblock {\em The Structure and Stability of Persistence Modules}.
\newblock Springer, 2016.

\bibitem{chazal2017robust}
Fr\'ed\'eric Chazal, Brittany~Terese Fasy, Fabrizio Lecci, Bertrand Michel,
  Alessandro Rinaldo, and Larry Wasserman.
\newblock Robust topological inference: Distance to a measure and kernel
  distance.
\newblock {\em J.\ Mach.\ Learn.\ Res.}, 18:5845--84, 2018.

\bibitem{Chazal2013}
Fr{\'e}d{\'e}ric Chazal, Brittany~Terese Fasy, Fabrizio Lecci, Alessandro
  Rinaldo, Aarti Singh, and Larry Wasserman.
\newblock On the bootstrap for persistence diagrams and landscapes.
\newblock {\em arXiv preprint arXiv:1311.0376}, 2013.

\bibitem{Chowdhury2017}
S.~Chowdhury and F.~M{\'e}moli.
\newblock Convergence of hierarchical clustering and persistent homology
  methods on directed networks.
\newblock {\em ArXiv}, abs/1711.04211, 2017.

\bibitem{Cohen-Steiner2006}
David Cohen-Steiner, Herbert Edelsbrunner, and John Harer.
\newblock Stability of persistence diagrams.
\newblock {\em Discrete {\&} Computational Geometry}, 37(1):103--120, dec 2006.

\bibitem{Czesla2018}
S.~Czesla, T.~Molle, and J.~H. M.~M. Schmitt.
\newblock A posteriori noise estimation in variable data sets.
\newblock {\em Astronomy {\&} Astrophysics}, 609:A39, jan 2018.

\bibitem{delfinado1995incremental}
Cecil Jose~A.\ Delfinado and Herbert Edelsbrunner.
\newblock An incremental algorithm for {B}etti numbers of simplicial complexes
  on the 3-sphere.
\newblock {\em Computer Aided Geometric Design}, 12(7):771--784, 1995.

\bibitem{Dindin2020}
Meryll Dindin, Yuhei Umeda, and Frederic Chazal.
\newblock Topological data analysis for arrhythmia detection through modular
  neural networks.
\newblock In {\em Advances in Artificial Intelligence}, pages 177--188.
  Springer International Publishing, 2020.

\bibitem{Edelsbrunner2002}
Edelsbrunner, Letscher, and Zomorodian.
\newblock Topological persistence and simplification.
\newblock {\em Discrete \& Computational Geometry}, 28(4):511--533, 2002.

\bibitem{Edelsbrunner2008}
Herbert Edelsbrunner and John Harer.
\newblock Persistent homology-a survey.
\newblock {\em Contemporary mathematics}, 453:257--282, 2008.

\bibitem{Edelsbrunner2010}
Herbert Edelsbrunner and John Harer.
\newblock {\em Computational Topology - an Introduction.}
\newblock American Mathematical Society, 2010.

\bibitem{Fasy2014}
Brittany~Terese Fasy, Fabrizio Lecci, Alessandro Rinaldo, Larry Wasserman,
  Sivaraman Balakrishnan, Aarti Singh, et~al.
\newblock Confidence sets for persistence diagrams.
\newblock {\em The Annals of Statistics}, 42(6):2301--2339, 2014.

\bibitem{Gholizadeh2018}
Shafie Gholizadeh and Wlodek Zadrozny.
\newblock A short survey of topological data analysis in time series and
  systems analysis.
\newblock {\em arXiv preprint arXiv:1809.10745}, 2018.

\bibitem{Hu2004}
J.~Hu, J.B. Gao, and K.D. White.
\newblock Estimating measurement noise in a time series by exploiting
  nonstationarity.
\newblock {\em Chaos, Solitons {\&} Fractals}, 22(4):807--819, nov 2004.

\bibitem{Khasawneh2017}
Firas~A. Khasawneh and Elizabeth Munch.
\newblock {\em Utilizing Topological Data Analysis for Studying Signals of
  Time-Delay Systems}, pages 93--106.
\newblock Springer International Publishing, Cham, 2017.

\bibitem{Khasawneh2018b}
Firas~A. Khasawneh and Elizabeth Munch.
\newblock Topological data analysis for true step detection in periodic
  piecewise constant signals.
\newblock {\em Proceedings of the Royal Society A: Mathematical, Physical and
  Engineering Science}, 474(2218):20180027, oct 2018.

\bibitem{Khasawneh2018a}
Firas~A. Khasawneh, Elizabeth Munch, and Jose~A. Perea.
\newblock Chatter classification in turning using machine learning and
  topological data analysis.
\newblock {\em {IFAC}-{PapersOnLine}}, 51(14):195--200, 2018.

\bibitem{Lawson2019}
Peter Lawson, Andrew~B. Sholl, J.~Quincy Brown, Brittany~Terese Fasy, and
  Carola Wenk.
\newblock Persistent homology for the quantitative evaluation of architectural
  features in prostate cancer histology.
\newblock {\em Scientific Reports}, 9(1), feb 2019.

\bibitem{Lee2012}
Hyekyoung Lee, Hyejin Kang, M.~K. Chung, Bung-Nyun Kim, and Dong~Soo Lee.
\newblock Persistent brain network homology from the perspective of dendrogram.
\newblock {\em {IEEE} Transactions on Medical Imaging}, 31(12):2267--2277, dec
  2012.

\bibitem{Yesilli2019}
Andreas~Otto Melih C.~Yesilli, Firas A.~Khasawneh.
\newblock Topological feature vectors for chatter detection in turning
  processes.
\newblock {\em arXiv:1905.08671}, 2019.

\bibitem{Munch2017}
Elizabeth Munch.
\newblock A user's guide to topological data analysis.
\newblock {\em Journal of Learning Analytics}, 4(2), 2017.

\bibitem{Myers2020}
Audun Myers and Firas~A. Khasawneh.
\newblock On the automatic parameter selection for permutation entropy.
\newblock {\em Chaos: An Interdisciplinary Journal of Nonlinear Science},
  30(3):033130, mar 2020.

\bibitem{Myers2019}
Audun Myers, Elizabeth Munch, and Firas~A Khasawneh.
\newblock Persistent homology of complex networks for dynamic state detection.
\newblock {\em arXiv preprint arXiv:1904.07403}, 2019.

\bibitem{Perea2018}
Jose~A. Perea.
\newblock A brief history of persistence.

\bibitem{Perea2014}
Jos\'e~A. Perea and John Harer.
\newblock Sliding windows and persistence: An application of topological
  methods to signal analysis.
\newblock {\em Foundations of Computational Mathematics}, pages 1--40, 2015.

\bibitem{Petrushenko2017}
David Petrushenko and Firas~A. Khasawneh.
\newblock Uncertainty propagation of system parameters to the dynamic response:
  An application to a benchtop pendulum.
\newblock In {\em Volume 4B: Dynamics, Vibration, and Control}. American
  Society of Mechanical Engineers, nov 2017.

\bibitem{Politis1994}
Dimitris~N. Politis and Joseph~P. Romano.
\newblock The stationary bootstrap.
\newblock {\em Journal of the American Statistical Association},
  89(428):1303--1313, dec 1994.

\bibitem{Rucco2016}
Matteo Rucco, Filippo Castiglione, Emanuela Merelli, and Marco Pettini.
\newblock Characterisation of the idiotypic immune network through persistent
  entropy.
\newblock In {\em Proceedings of {ECCS} 2014}, pages 117--128. Springer
  International Publishing, 2016.

\bibitem{Takens1981}
Floris Takens.
\newblock Detecting strange attractors in turbulence.
\newblock In David Rand and Lai-Sang Young, editors, {\em Dynamical Systems and
  Turbulence, Warwick 1980}, volume 898 of {\em Lecture Notes in Mathematics},
  pages 366--381. Springer Berlin Heidelberg, 1981.

\bibitem{Tralie2018b}
Christopher~J. Tralie and Jose~A. Perea.
\newblock (quasi)periodicity quantification in video data, using topology.
\newblock {\em {SIAM} Journal on Imaging Sciences}, 11(2):1049--1077, jan 2018.

\bibitem{Urbanowicz2003}
Krzysztof Urbanowicz and Janusz~A. Ho{\l}yst.
\newblock Noise-level estimation of time series using coarse-grained entropy.
\newblock {\em Physical Review E}, 67(4), apr 2003.

\bibitem{Wan2014}
Xiang Wan, Wenqian Wang, Jiming Liu, and Tiejun Tong.
\newblock Estimating the sample mean and standard deviation from the sample
  size, median, range and/or interquartile range.
\newblock {\em {BMC} Medical Research Methodology}, 14(1), dec 2014.

\bibitem{Wood1996}
G.R. Wood and B.P. Zhang.
\newblock Estimation of the lipschitz constant of a function.
\newblock {\em Journal of Global Optimization}, 8(1), jan 1996.

\end{thebibliography}
\appendix
\section{Proofs and Algorithms}

In the appendix, we provide an omitted proof and algorithm.
Specifically, we have included the theorem showing the relationship
between the mean lifetime and mean birth and death times of a persistence
diagram and the algorithm for calculating the persistence diagram for the
sublevel-set filtration of a time series.

\subsection{Proof of Expected Lifetime Equation} \label{app:proofs}

The following proof supports a claim made in \ssecref{stats_relationship_background}.
In what follows, we use the notation $\mu_{S}$ to denote the expected value
of the distribution over the multi-set $S$.

\begin{theorem}[Expected Lifetime]\label{thm:mean-lifetime}
    Let $\mathcal{D}=\{(b_i,d_i)\}_{i=1}^n$ be a persistence diagram.
    Let~$B$,~$D$, and $L$ be the multi-sets of birth times, death times, and
    lifetimes, respectively.
    Then, the average lifetime~is:
    \begin{equation*}
        \mu_{L} = \mu_{D} - \mu_{B}.
    \end{equation*}
\end{theorem}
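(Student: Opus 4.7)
My plan is to observe that the statement is essentially an instance of the linearity of the arithmetic mean, applied to the multi-set of lifetimes viewed as the coordinate-wise difference of the multi-sets of death and birth times. The key structural fact is that the persistence diagram comes as an ordered collection of pairs $(b_i, d_i)$ with $\ell_i := d_i - b_i$, so the three multi-sets $B$, $D$, and $L$ each have cardinality $n$ and are naturally indexed by the same pairing (the Elder Rule pairing described in Section~\ref{sec:sublevel_set_persistence_background}).

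First, I would unfold the definition of $\mu_L$ as the arithmetic mean over the multi-set $L$,
\begin{equation*}
\mu_L \;=\; \frac{1}{n}\sum_{i=1}^n \ell_i.
\end{equation*}
Next, I would substitute $\ell_i = d_i - b_i$ using the definition of lifetime given just before Section~\ref{ssec:noise_effects}, and split the sum by linearity:
\begin{equation*}
\mu_L \;=\; \frac{1}{n}\sum_{i=1}^n (d_i - b_i) \;=\; \frac{1}{n}\sum_{i=1}^n d_i \;-\; \frac{1}{n}\sum_{i=1}^n b_i.
\end{equation*}
Finally, I would recognize the two resulting averages as $\mu_D$ and $\mu_B$ respectively, yielding $\mu_L = \mu_D - \mu_B$.

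The only subtlety worth flagging is bookkeeping: one must make sure that, when the multi-sets $B$, $D$, $L$ are formed, the pairing between a birth and its corresponding death is preserved so that $d_i - b_i$ is well-defined term by term. Because persistence pairs arise with a canonical bijection between paired births and deaths, this is automatic and the argument needs no more than the linearity of finite sums. To connect the discrete statement back to equation~\eqref{eq:muL_expression}, I would then note that if $B$ and $D$ are treated as random variables with densities $f_B$ and $f_D$ (the continuous idealization used in Section~\ref{ssec:stats_relationship_background}), the same linearity argument applied to expectations gives $\mathbb{E}(L) = \mathbb{E}(D) - \mathbb{E}(B) = \int z\,[f_D(z) - f_B(z)]\,dz$, which is the integral form quoted in the body of the paper. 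There is no genuinely hard step here; the theorem is a clean consequence of the definitions.
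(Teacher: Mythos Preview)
Your proposal is correct and follows essentially the same approach as the paper: both proofs simply expand $\mu_L = \frac{1}{n}\sum_{i=1}^n (d_i - b_i)$, split the sum by linearity, and identify the two pieces as $\mu_D$ and $\mu_B$. Your additional remarks about the pairing bookkeeping and the connection to the integral form in \eqref{muL_expression} are nice clarifications but do not change the underlying argument.
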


\begin{proof}
    By definition, $B={\{b_i\}}_{i=1}^n$, $D={\{d_i\}}_{i=1}^n$, and $L = \{ d_i-b_i\}_{i=1}^n$.
    By definition of mean and of $L$, the mean lifetime is
    \begin{equation}
    \begin{split}
    \mu_{L}
    		& = \frac{1}{n}\sum_{i = 1}^{n} (d_i - b_i).
    \end{split}
    \label{eq:L_bar_proof_step_1}
    \end{equation}
    Expanding the sum to two separate sums and using the commutative property of
    addition, we get:
    \begin{equation}
    \begin{split}
    \mu_L & = \frac{1}{n}\sum_{i = 1}^{n} (d_i - b_i)\\
    		& = \frac{1}{n}\sum_{i = 1}^{n} d_i - \frac{1}{n}\sum_{i = 1}^{n} b_i \\
    		& = \mu_{D} - \mu_{B},
    \end{split}
    \label{eq:L_bar_proof_step_2}
    \end{equation}
    where the last equality is by definition of $\mu_{D}$ and $\mu_{B}$.
    Thus, we conclude~$\mu_{L} = \mu_{D} - \mu_{B}$.
\end{proof}

\subsection{Algorithm} \label{app:algorithms}

In \algref{0dpers}, we give an $\Theta(n \log n)$ algorithm for
computing the persistence
diagram for the sublevel sets of a real-valued function.
The time series is represented as an
array $A$ of $n$ real numbers. This array is interpreted as a
continuous function~$f \colon [1,n] \to \R$, where $f(i) = A[i]$ and $f(x)$ is
calculated by linearly interpolating between~$f(\floor{x})$ and $f(\ceil{x})$.
The algorithm is similar to that presented
in~\cite{delfinado1995incremental}, but uses a priorty queue instead of the
union-find data structure.

We begin (\alglnref{defM}) by creating a linked list of local extrema in $A$,
ordered by their index in $A$.
This step takes $\Theta(n)$ time.
Each $m \in M$ stores its value or height
as~$m.height$ as well as a pointer $m.next$ to the next element of $M$.
In order to avoid a boundary effect, if the first value is a minimum, then we included it as a
minimum (resp., maximum) with height~$-\infty$ (resp., $\infty$).
We handle the last value similarly.
If $p$ is a pointer to an element of~$M$, we use~$M[p]$ to denote the
corresponding element of the linked list.

We then create a priority queue $Q$
in order to keep track of the potential min/max pairs that we could
make (where smaller value indicates higher
priority).
The priority queue stores a pair $(ptr,v)$, where $ptr$ is a pointer to an
element of $M$ and $v$ is the priority of~$M[ptr]$, in
particular,~$v=\left|M[ptr].height-M[ptr].next.height \right|$. $Q.pop$
returns the (value of the) element of $Q$ with lowest priority, and
simultaneously removes it from $Q$.
The construction of the priority queue in  \alglnref{defQ} takes $\Theta(n \log
n)$.

The while loop starting on \alglnref{while} maintains the invariant that when
entering the while loop for the $i$-th time the $i-1$ smallest persistence
points have been calculated.
The loop is straightforward, other than the
update of $Q$ and $M$ in \alglnref{update}.  To do so, we first set~$m'=m.prev$ and remove
$m$ and~$m.next$ from $M$ as well as
$m'$, $m$, and $m.next$ from $Q$.
Finally, we add $m'$ to $Q$ with priority~$|m'.height-m'.next.height|$\;
The while loop executes~$\Theta(n)$ times; therefore, the total runtime of this
algorithm is $\Theta(n \log n)$.

For our specific python implementation of this algorithm, we use the \textit{SortedList} data structure from the \texttt{Sorted Containers} Python package. Our implementation is available through the python package \texttt{teaspoon}\footnote{\label{footnote_label}\url{https://lizliz.github.io/teaspoon/}}.

\begin{algorithm}[h]\caption{Zero-Dimensional Persistence Algorithm.}\label{alg:0dpers}
    \SetAlgoLined
    \KwData{Array $A$ of $n$ Real Numbers}
    \KwResult{Persistence Diagram}
    \SetKwBlock{Beginn}{beginn}{ende}
    \Begin{
        $M =$ list of local extrema in $A$ (non-endpoint maxima and
        minima)\;\label{algln:defM}
        $Q =$ priority queue of pointers to elements of $M$\;\label{algln:defQ}
        \While{$|M|>3$}{\label{algln:while}
            $m \gets Q.pop$\;
            $b \gets \min \{ m.height,m.next.height\}$\;
            $d \gets \max \{ m.height,m.next.height\}$\;
            Add $(b,d)$ to $pairs$\;
            Update $Q$ and $M$\;\label{algln:update}
        }
        \KwRet{pairs}
     }
\end{algorithm}

\end{document}